\newtheorem{Theorem}{Theorem}[section]
\newtheorem{Lemma}[Theorem]{Lemma}
\newtheorem{Proposition}[Theorem]{Proposition}
\newtheorem{Definition}[Theorem]{Definition}
\newtheorem{Remark}[Theorem]{Remark}
\def\V{\mbox{Var}}
\def\Z{{\mathbb Z}}
\def\R\re
\def\V{\bf V}
\def \re{{\mathbb R}}
\def \C{{\mathbb C}}
\def \V{{\bf V}}
\def \X{X_{\perp}}
\def \H{\mathcal H}
\newcommand{\id}{\mathrm{Id}}
\newcommand{\abs}[1]{\lvert #1 \rvert}
\newcommand{\norm}[1]{\lVert #1 \rVert}
\begin{document}
\title[Range of the attenuated ray transform]{On the range of the attenuated ray transform for unitary connections}

\author[G.P. Paternain]{Gabriel P. Paternain}
\address{ Department of Pure Mathematics and Mathematical Statistics,
University of Cambridge,
Cambridge CB3 0WB, UK}
\email {g.p.paternain@dpmms.cam.ac.uk}

\author[M. Salo]{Mikko Salo}
\address{Department of Mathematics and Statistics, University of Jyv\"askyl\"a}
\email{mikko.j.salo@jyu.fi}

\author[G. Uhlmann]{Gunther Uhlmann}
\address{Department of Mathematics, University of Washington and Fondation des
Sciences Math\'ematiques de Paris}

\email{gunther@math.washington.edu}




\begin{abstract} We describe the range of the attenuated ray transform of a unitary connection on a simple surface acting on functions and 1-forms. We use this description to determine the range of the ray transform acting on symmetric tensor fields.

\end{abstract}

\maketitle

\section{Introduction}

In this paper we consider range characterizations for certain ray transforms. The most basic example of the kinds of transforms studied in this paper is the X-ray (or Radon) transform in $\re^2$, which encodes the integrals of a function $f$ in the Schwartz space $\mathcal{S}(\re^2)$ over straight lines:
$$
Rf(s,\omega) = \int_{-\infty}^{\infty} f(s\omega + t\omega^{\perp}) \,dt, \quad s \in \re,\;\;\omega \in S^1.
$$
Here $\omega^{\perp}$ is the rotation of $\omega$ by $90$ degrees counterclockwise. The properties of this transform are classical and well studied \cite{Hel}. One of the basic properties is the characterization of the range of $R$, which describes all possible functions $Rf$ for $f \in \mathcal{S}(\re^2)$ in terms of smoothness, decay and symmetry properties (most importantly, moments with respect to $s$ should be homogeneous polynomials in $\omega$). We refer to \cite{Hel} for the precise statement.

The X-ray transform forms the basis for many imaging methods such as CT and PET in medical imaging. A number of imaging methods involve generalizations of this transform. One such method is SPECT imaging, which is related to the X-ray transform with exponential attenuation factors. In seismic and ultrasound imaging one encounters ray transforms where the measurements are given by integrals over more general families of curves, often modeled as the geodesics of a Riemannian metric. Moreover, integrals of vector fields or other tensor fields instead of just integrals of functions over geodesics may arise, and these transforms are also useful in rigidity questions in differential geometry. See the survey papers \cite{Finch}, \cite{Paternain_survey}, \cite{SU} for more details and background.

The characterization of the range is a basic question for all transforms of this type. In this paper we examine the range of the attenuated ray transform on a simple surface when the attenuation is given by a unitary connection. As an application we describe the range of the ray transform acting on symmetric tensor fields. The notation of Riemannian geometry will be used throughout the paper.

Let $(M,g)$ be a compact oriented Riemannian surface with smooth boundary.  We shall assume that $(M,g)$ is {\it simple}, that is, its boundary is strictly convex and any two points are joined by a unique geodesic depending smoothly on the end points.

We wish to consider unitary connections on the trivial vector bundle $M \times \C^n$. For us a unitary connection means a skew-Hermitian $n\times n$ matrix $A$ whose entries are smooth 1-forms on $M$. Equivalently, we may regard $A$ as a smooth function $A:TM\to \mathfrak{u}(n)$, where $\mathfrak{u}(n)$ is the Lie algebra of the unitary group $U(n)$ and $A$ depends linearly in $v\in T_{x}M$ for each $x\in M$. The connection $A$ induces a covariant derivative $d_A$ acting on sections $s \in C^{\infty}(M, \C^n)$ by $d_A s = ds + As$. The fact that $A$ is unitary means that it respects the sesquilinear inner product of sections in the sense that 
$$
d(s_1, s_2) = (d_A s_1, s_2) + (s_1, d_A s_2).
$$
We refer to \cite{PSU} for more background on unitary connections and the related ray transforms.

Let $SM = \{(x,v) \in TM \,;\, \abs{v} = 1 \}$ be the unit tangent bundle. The geodesics going from $\partial M$ into $M$ can be parametrized by the set $\partial_+ (SM) := \{(x,v) \in SM \,;\, x \in \partial M, \langle v,\nu \rangle \leq 0 \}$ where $\nu$ is the outer unit normal vector to $\partial M$. We also let $\partial_- (SM) := \{(x,v) \in SM \,;\, x \in \partial M, \langle v,\nu \rangle \geq 0 \}$ and denote by $X$ the vector field associated with the geodesic flow $\phi_t$ acting on $SM$.

Given $f\in C^{\infty}(SM,\C^n)$, consider the following transport equation for $u: SM \to \C^n$,
$$Xu + Au = -f \ \ \text{in $SM$}, \quad u|_{\partial_-(SM)} = 0.
$$ Here $A(x,v)$ (the restriction of $A$ to $SM$) acts on functions on $SM$ by multiplication. (Below we will consider $A$ both as a matrix of $1$-forms and as a function on $SM$, depending on the situation.) On a fixed geodesic the transport equation becomes a linear system of ODEs with zero initial condition, and therefore this equation has a unique solution $u=u^f$. 

\begin{Definition}The geodesic ray transform of $f \in C^{\infty}(SM,\C^n)$ with attenuation determined by $A$ is given by$$I_{A} (f) := u^f|_{\partial_+(SM)}.$$ 
\end{Definition}

We note that $I_{A}$ acting on sums of $0$-forms and $1$-forms always has a nontrivial kernel, since 
$$
I_{A}(Xp+Ap) = 0 \text{ for any $p \in C^{\infty}(M,\C^n)$ with $p|_{\partial M} = 0$}.
$$
Thus from the ray transform $I_{A}(f)$ one only expects to recover $f$ up to an element having this form. 
In fact,  in \cite{PSU} we proved the following injectivity result:

\begin{Theorem}\cite{PSU} Let $M$ be a compact simple surface. Assume that $f:SM\to\C^n$ is a smooth function of the form
$F(x)+\alpha_{j}(x)v^j$, where $F:M\to\C^n$ is a smooth function
and $\alpha$ is a $\C^n$-valued 1-form. Let also $A: TM \to \mathfrak{u}(n)$ be a unitary connection. If $I_{A}(f)=0$, then
$F =0$ and $\alpha=dp+Ap$, where $p:M\to\C^n$ is a smooth
function with $p|_{\partial M}=0$.
\label{thm:injective_A}
\end{Theorem}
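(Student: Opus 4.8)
The plan is to reduce the attenuated problem to a purely geometric one by means of fibrewise holomorphic integrating factors, and then to read off the conclusion from a Fourier analysis on the circle fibres of $SM$. First I would record the hypothesis in the form it will be used: since $I_A(f)=0$, the solution $u=u^f$ of $Xu+Au=-f$ satisfies $u|_{\partial_-(SM)}=0$ by construction and $u|_{\partial_+(SM)}=0$ by assumption, so $u$ vanishes on all of $\partial(SM)$. I would then decompose functions on $SM$ into Fourier modes in the angular (fibre) variable, writing $C^{\infty}(SM,\C^n)=\bigoplus_{k}\Omega_k$ where $\Omega_k$ is the eigenspace of the vertical vector field $V$ for the eigenvalue $ik$, and splitting the geodesic vector field as $X=\eta_++\eta_-$ with $\eta_\pm\colon\Omega_k\to\Omega_{k\pm1}$ (the Guillemin--Kazhdan operators). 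In this language $F$ lies in $\Omega_0$, the $1$-form part $\alpha_j v^j$ lies in $\Omega_{-1}\oplus\Omega_1$, and the connection $A$, viewed as a function on $SM$, lies in $\Omega_{-1}\oplus\Omega_1$ as well. I will call a function \emph{holomorphic} if it has no negative Fourier modes and \emph{antiholomorphic} if it has no positive modes.

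The central step is to produce holomorphic and antiholomorphic integrating factors for $A$: invertible $R_\pm\in C^{\infty}(SM,GL(n,\C))$ with $XR_\pm+AR_\pm=0$, where $R_+$ and $R_+^{-1}$ are holomorphic and $R_-$ and $R_-^{-1}$ are antiholomorphic. Granting these, I would set $w_\pm:=R_\pm^{-1}u$ and compute, using the identity $X(R_\pm^{-1})=R_\pm^{-1}A$ that follows from $XR_\pm=-AR_\pm$, that $Xw_\pm=-R_\pm^{-1}f$ with $w_\pm|_{\partial(SM)}=0$. Thus each $w_\pm$ solves the \emph{unattenuated} transport equation with zero boundary data, and $R_+^{-1}f$ has degree $\geq-1$ while $R_-^{-1}f$ has degree $\leq 1$. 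The degree-control (holomorphicity) results for the unattenuated transform on simple surfaces --- obtained from the Pestov energy identity, which is where simplicity, i.e. the absence of conjugate points, enters and forces the extreme Fourier modes to vanish --- then show that $w_+$ is holomorphic and $w_-$ is antiholomorphic. Since $u=R_+w_+$ is then a product of two holomorphic factors and $u=R_-w_-$ a product of two antiholomorphic factors, $u$ retains only its zero Fourier mode: $u=u_0\in\Omega_0$, that is, $u$ is the pullback of a smooth function on $M$.

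With $u=u_0$ in hand the conclusion is immediate by comparing Fourier degrees in $Xu_0+Au_0=-f$. Writing $A=A_{-1}+A_1$ and $\alpha=\alpha_{-1}+\alpha_1$ for the $\Omega_{\mp1}$ components, the degree-$0$ component of the equation reads $0=-F$, so $F=0$; the degree-$(\pm1)$ components read $\eta_\pm u_0+A_{\pm1}u_0=-\alpha_{\pm1}$, which together say exactly that $-(Xu_0+Au_0)=\alpha$. Setting $p:=-u_0$, a smooth function on $M$ with $p|_{\partial M}=u|_{\partial(SM)}=0$, this is precisely the identity $\alpha=dp+Ap$, as claimed.

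The main obstacle is the existence of the integrating factors $R_\pm$. For scalar attenuation one simply exponentiates a holomorphic primitive obtained from the surjectivity of $\eta_-$ on a simple surface, but for a matrix-valued unitary connection the non-commutativity obstructs this naive construction, and one must instead solve the matrix transport equation $XR+AR=0$ within the holomorphic class by a more robust argument (for instance a continuity or Fredholm scheme built on the scalar case), while keeping $R$ smooth and invertible up to the boundary. This is exactly the point at which the unitarity of $A$ is needed, since it is what makes the connection terms skew-adjoint in the energy estimates that underlie both the integrating-factor construction and the degree control invoked above.
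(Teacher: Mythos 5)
Your overall skeleton --- show that the solution $u$ of the transport equation with vanishing boundary values has only the zero Fourier mode in the fibre variable, then read off $F=0$ and $\alpha = dp+Ap$ with $p=-u_0$ by comparing degrees --- is indeed the skeleton of the proof in \cite{PSU}, and your final bookkeeping paragraph is correct. The genuine gap is the step you yourself call the main obstacle: the existence of \emph{matrix-valued} holomorphic integrating factors, i.e.\ $R_\pm\in C^{\infty}(SM,GL(n,\C))$ with $XR_\pm+AR_\pm=0$ such that $R_\pm$ \emph{and} $R_\pm^{-1}$ are fibrewise holomorphic (resp.\ antiholomorphic) and smooth up to the boundary. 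This cannot be dispatched by ``a continuity or Fredholm scheme built on the scalar case''. In the scalar case one sets $R=e^{w}$ where $w$ is a fibrewise holomorphic solution of $Xw=-a$ (Salo--Uhlmann \cite{SaU}); this works because $X(e^{w})=(Xw)e^{w}$ for scalars, an identity that fails for matrices since $W$ and $XW$ need not commute. Producing a holomorphic $R$ whose inverse is also holomorphic is a non-abelian (Riemann--Hilbert type) factorization problem whose solvability on simple surfaces is not known; this is pointed out in \cite{PSU} and in the survey \cite{Paternain_survey}, and it is precisely the reason the proof in \cite{PSU} takes a different route. Your closing claim that unitarity of $A$ is what underlies the integrating-factor construction is also misplaced: scalar integrating factors exist for arbitrary, not necessarily unitary, attenuations, and unitarity plays no known role in constructing such factors.

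What \cite{PSU} actually does is keep the connection inside the transport equation and prove the degree control directly for $X+A$, using exactly the two places where unitarity genuinely enters. First, there is a Pestov-type energy identity for the operator $X+A$ (valid because $A$ is skew-Hermitian), in which the curvature $F_A$ of the connection appears as an indefinite term. Second, this term is given a favorable sign by conjugating with \emph{scalar} holomorphic integrating factors carrying a large parameter: choosing a real $1$-form $\varphi$ with $d\varphi$ equal to the area form and a holomorphic $w$ with $Xw=-i\varphi$, conjugation by $e^{sw}$ replaces $A$ by the unitary connection $A-si\varphi\,\mathrm{Id}$, shifting the curvature term by $s\,\mathrm{Id}$; since $e^{\pm sw}$ are both fibrewise holomorphic, this conjugation is harmless for the conclusion, and for $s$ large the energy identity forces the negative (resp.\ positive) Fourier modes of $u$ to vanish. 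This yields the key holomorphicity proposition for the \emph{attenuated} equation, after which the argument concludes with essentially your Fourier-mode comparison. Two further points you elide: (a) your fibrewise Fourier analysis presupposes that $u=u^{f}$ is smooth on all of $SM$, which, given only $I_A(f)=0$, is a nontrivial regularity statement for the transport equation proved in \cite{PSU} (cf.\ Section \ref{sec:prelim}); and (b) the unattenuated degree-control result you invoke must be available in the one-sided form ``source with modes $\geq -1$ and zero boundary data implies solution with modes $\geq 0$'', whereas the statements in \cite{pestovuhlmann_imrn}, \cite{SaU} carry parity restrictions, so some care is needed there as well --- though both of these are minor compared to the integrating-factor gap.
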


Based on this result we shall describe the functions in $C^{\infty}(\partial_+ (SM),\C^n)$ which are in the range of $I_{A}^0$ ($I_{A}$ acting on 0-forms) and $I_{A}^{1}$ ($I_{A}$ acting on 1-forms) very much in the spirit of \cite{pestovuhlmann_imrn}, in which the unattenuated case $A=0$ was considered. The description of the range involves the following boundary data:

\begin{enumerate}
\item the scattering relation $\alpha: \partial_+(SM) \to \partial_-(SM)$ which maps a starting point and direction of a geodesic to the end point and direction. If $(M,g)$ is simple, then knowing $\alpha$ is equivalent to knowing the boundary distance function which encodes the distances between any pair of boundary points \cite{Mi}. On two dimensional simple manifolds, the boundary distance function determines the metric up to an isometry which fixes the boundary \cite{PU};
\item the scattering data of the connection $A$ (see Section \ref{sec:prelim});
\item the fibrewise Hilbert transform at the boundary (see Section \ref{sec:prelim}).
\end{enumerate}

Let $\mathcal S^{\infty}(\partial_{+}(SM),\C^n)$ denote the set of those $w\in C^{\infty}(\partial_{+}(SM),\C^n)$ with the property that the unique solution $w^{\sharp}$ to the transport equation 
\[Xw^{\sharp}+Aw^{\sharp}=0,\;\;\;\;\; w^{\sharp}|_{\partial_{+}(SM)}=w\]
is smooth. In Section \ref{sec:range} we define operators
$$P_{\pm}:\mathcal S^{\infty}(\partial_{+}(SM),\C^n)\to C^{\infty}(\partial_{+}(SM),\C^n)$$
exclusively in terms of the boundary data enumerated above and using these operators we can formulate our main result as follows (see Theorem \ref{thm:range_unitary} below):

\begin{Theorem} Let $(M,g)$ be a simple surface and $A$ a unitary connection. Then
\begin{enumerate}
\item A function $u\in C^{\infty}(\partial_{+}(SM),\C^n)$ belongs to the range of $I_{A}^{0}$ if and only if $u=P_{-}w$ for $w\in \mathcal S^{\infty}(\partial_{+}(SM),\C^n)$.
\item A function $u\in C^{\infty}(\partial_{+}(SM),\C^n)$ belongs to the range of $I_{A}^{1}$ if and only if $u=P_{+}w+I_{A}^{1}(\eta)$ for $w\in \mathcal S^{\infty}(\partial_{+}(SM),\C^n)$ and $\eta\in \mathfrak{H}_{A}$.
\end{enumerate}
\end{Theorem}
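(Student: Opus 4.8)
The plan is to follow the strategy of Pestov--Uhlmann \cite{pestovuhlmann_imrn}, with the geodesic vector field $X$ replaced by the attenuated transport operator $X+A$ and with the fibrewise Hilbert transform $H$ as the main tool. The engine of the argument is a commutator formula that I would record first: for $u\in C^{\infty}(SM,\C^n)$,
$$[H,X+A]u=\X(u_0)+(\X u)_0+\beta_A(u),$$
where $u_0$ is the fibrewise average and $\beta_A$ is a zeroth order operator built from the connection. Extending the classical ($A=0$) identity to incorporate $A$ is the first technical step. The essential structural point is that, exactly like the first two terms, $\beta_A(u)$ has angular Fourier modes only in $\{-1,0,1\}$, so that $[H,X+A]u$ is always the sum of a $0$-form and a $1$-form on $SM$. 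Since $A$ is odd under $v\mapsto -v$ while the antipodal map preserves the parity of $H$, splitting $u$ into its even and odd parts will separate the $0$-form from the $1$-form contribution.

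For sufficiency, take $w\in\mathcal S^{\infty}(\partial_{+}(SM),\C^n)$ and let $w^{\sharp}$ be the smooth solution of $Xw^{\sharp}+Aw^{\sharp}=0$ with $w^{\sharp}|_{\partial_{+}(SM)}=w$. Because $w^{\sharp}$ is a first integral, $(X+A)(Hw^{\sharp})=-[H,X+A]w^{\sharp}$, which by the commutator formula is the negative of a sum of a $0$-form and a $1$-form. For the \emph{odd} part of $w^{\sharp}$ the mode-$0$ term $\X(u_0)$ drops out and the source is a pure $0$-form $f_0$; for the \emph{even} part it is a pure $1$-form $f_1$. In the odd case $Hw^{\sharp}$ is, up to the boundary correction encoded in the definition of $P_{-}$, the solution of the transport equation with source $f_0$, so that $P_{-}w=I_A^{0}(f_0)$; this is the `if' direction of (1). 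The even case produces $P_{+}w=I_A^{1}(f_1)$, and since $I_A^{1}(\eta)$ is manifestly in the range of $I_A^{1}$ for any $1$-form $\eta\in\mathfrak H_A$, the `if' direction of (2) follows.

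For necessity, suppose $u=I_A^{0}(f)$ with $f\in C^{\infty}(M,\C^n)$ and let $u^f$ solve $Xu^f+Au^f=-f$, $u^f|_{\partial_{-}(SM)}=0$. The idea is to manufacture from $u^f$ and $Hu^f$ a smooth first integral: using the commutator formula one forms the combination $w^{\sharp}$ for which the transport source cancels, so that $(X+A)w^{\sharp}=0$, and then shows $w:=w^{\sharp}|_{\partial_{+}(SM)}\in\mathcal S^{\infty}$ and $P_{-}w=u$. In the $1$-form case one decomposes a given source $\alpha$ into the part reachable by the even-first-integral construction, which yields $P_{+}w$, plus a remainder lying in the complementary space $\mathfrak H_A$; the transform of that remainder is the extra term $I_A^{1}(\eta)$. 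This is precisely where Theorem~\ref{thm:injective_A} enters: it guarantees that the only ambiguity in the reconstruction is the gauge term $dp+Ap$ with $p|_{\partial M}=0$, so that the construction recovers the range exactly up to the directions collected in $\mathfrak H_A$.

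The main obstacle I expect is the boundary regularity, namely showing that the first integral $w^{\sharp}$ built in the necessity direction is smooth up to $\partial(SM)$, i.e.\ that the resulting boundary datum $w$ genuinely lies in $\mathcal S^{\infty}$. The Hilbert transform $H$ is nonlocal in the fibre and does not obviously preserve smoothness of solutions of the transport equation across the glancing region, so controlling this requires the smoothness theory for $X+A$ on simple surfaces together with the surjectivity and injectivity properties underlying Theorem~\ref{thm:injective_A}. By comparison, the derivation of the attenuated commutator formula, while it involves the connection modes $\beta_A$, is a routine computation; the delicate analytic core is matching boundary behaviour through the scattering relation, the scattering data of $A$, and the boundary Hilbert transform so that $P_{\pm}w$ has exactly the asserted values on $\partial_{+}(SM)$.
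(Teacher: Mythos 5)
Your sufficiency direction is essentially the paper's own argument: apply the bracket formula \eqref{eq:bracket} to the first integral $w^{\sharp}$, split by parity so that the odd part produces a pure $0$-form source and the even part a pure $1$-form source, and then apply $I_A$ together with the identity $I_A((X+A)g)=B(g|_{\partial(SM)})$ to conclude $P_{-}w\in\mathrm{Range}(I_A^0)$ and $P_{+}w\in\mathrm{Range}(I_A^1)$. That half is correct and matches Proposition \ref{prop:factorization}.

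The necessity direction, however, contains a genuine gap. Your plan to ``manufacture from $u^f$ and $\H u^f$ a smooth first integral for which the transport source cancels'' does not work: since $f\in\Omega_0$ we have $\H f=0$, so
$$
(X+A)\H u^f \;=\; -(X_{\perp}+\star A)(u^f_0)\;-\;\{(X_{\perp}+\star A)(u^f)\}_0 ,
$$
a source built from the unknown modes $u^f_0,u^f_{\pm1}$, and no combination of $u^f$ and $\H u^f$ is annihilated by $X+A$ in general. What necessity actually demands is the converse of the factorization identities: given $u=I_A^0(f)$, one must \emph{produce} $w\in\mathcal S^{\infty}(\partial_{+}(SM),\C^n)$ whose first integral $w^{\sharp}$ has degree-$\pm1$ modes equal to a prescribed solenoidal $1$-form $\beta$ with $\star d_A\beta=cf$ (solenoidality of $\beta$ is forced by the degree-zero component of $(X+A)w^{\sharp}=0$). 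This splits into two substantial ingredients that your proposal never supplies: (i) solvability of the system $\star d_A\beta=f$, $d_A\star\beta=0$ on $M$ (Lemma \ref{lemma:CR}(2), proved in the paper by a semiglobal solvability argument via Carleman estimates); and (ii) surjectivity of the adjoints $(I_A^0)^*$ and $(I_A^1)^*$ onto functions and solenoidal $1$-forms (Theorems \ref{thm:surjective} and \ref{thm:surjectiveon1forms}), which rest on ellipticity of the normal operator $N_{0,A}$ (Lemma \ref{lemma:elliptic}) and Fredholm/duality arguments. This is also where Theorem \ref{thm:injective_A} genuinely enters --- as the injectivity input for the adjoint-surjectivity machinery --- not, as you assert, by ``pinning down the gauge ambiguity'' of the reconstruction. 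Similarly in case (2), the ``remainder lying in the complementary space $\mathfrak H_A$'' is not automatic: one needs the decomposition $\alpha=d_Ap+\star d_Aa+\eta$ with $p|_{\partial M}=0$, $\eta\in\mathfrak H_A$ (Lemma \ref{lemma:CR}(1)), which is an elliptic boundary value problem statement for $d_A+d_A^*$ with relative boundary conditions (Lemma \ref{lemma:hodgediracconnection}); it is precisely this analysis that identifies the obstruction as the finite-dimensional space of $A$-harmonic forms, rather than some unspecified complement. Finally, the boundary regularity you single out as the main obstacle is in fact absorbed into the surjectivity theorems, which directly deliver $w\in\mathcal S^{\infty}(\partial_{+}(SM),\C^n)$; the real analytic core you are missing is the adjoint surjectivity plus the Hodge-type solvability, not the smoothness of $w^{\sharp}$.
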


Here $\mathfrak H_{A}$ denotes the finite dimensional space of 1-forms $\alpha$ such that $d_{A}\alpha=d^*_{A}\alpha=0$ and $j^*\alpha=0$, where $j:\partial M\to M$ is the inclusion map. This space (which is trivial if $A=0$) can also be expressed in terms of a boundary value problem for a suitable Cauchy-Riemann operator with totally real boundary conditions (see Section \ref{sec:proofmain}).

One of the main motivations for considering the range of $I_{A}^{0}$ is to describe in turn the range of the (unattenuated) ray transform acting on symmetric tensor fields. The invertibility of this transform on simple surfaces was recently proved in \cite{PSU_tensor}, which includes more details on the following facts. Given a symmetric (covariant) $m$-tensor field $f = f_{i_1 \cdots i_m} \,dx^{i_1} \otimes \cdots \otimes \,dx^{i_m}$ on $M$, we define the corresponding function on $SM$ by 
$$
f(x,v) = f_{i_1 \cdots i_m} v^{i_1} \cdots v^{i_m}.
$$
The ray transform of $f$ is defined by 
$$
I(f)(x,v) = \int_0^{\tau(x,v)} f(\phi_t(x,v)) \,dt, \quad (x,v) \in \partial_+(SM),
$$
where $\tau(x,v)$ is the exit time of the geodesic determined by $(x,v)$.
If $h$ is a symmetric $(m-1)$-tensor field, its inner derivative $dh$ is a symmetric $m$-tensor field defined by $dh=\sigma\nabla h$, where $\sigma$ denotes
symmetrization and $\nabla$ is the Levi-Civita connection. It is easy to see that
$$
dh(x,v)= Xh(x,v).
$$
If additionally $h|_{\partial M} = 0$, then clearly $I(dh) = 0$. The transform $I$ is said to be \emph{$s$-injective} if these are the only elements in the kernel. The terminology arises from the fact that
any tensor field $f$ may be written uniquely as $f=f^s+dh$, where $f^s$
is a symmetric $m$-tensor with zero divergence and $h$ is an $(m-1)$-tensor
with $h|_{\partial M} = 0$ (cf. \cite{Sh}). The tensor fields $f^s$ and
$dh$ are called respectively the {\it solenoidal} and {\it potential} parts
of $f$. Saying that $I$ is $s$-injective is saying precisely that
$I$ is injective on the set of solenoidal tensors.

In \cite{PSU_tensor} we proved that when $(M,g)$ is a simple surface, then $I$ is $s$-injective for any $m$.  The main idea for one of the two proofs presented in \cite{PSU_tensor} consists in reducing the problem to the case of an attenuated ray transform $I_{A}^{0}$ of a suitable unitary connection $A$ with $n=1$ (scalar case). Using this idea we describe the range of $I$ on symmetric $m$-tensors using the following data.
Since a simple surface is topologically a disk and inherits a complex structure from the metric we can choose a smooth nowhere vanishing section
$\xi$ of the canonical line bundle. As we shall explain in Section \ref{sec:tensor} this section $\xi$ together with the metric $g$ determine naturally a unitary connection $A_{\xi,g}$ with $n=1$. The range of $I$ will be described by the scattering relation $\alpha$, the scattering data of $A_{\xi,g}$ and the fibrewise Hilbert transform at the boundary. For a precise statement see Theorem \ref{thm:rangetensor} below. 

A brief description of the contents of this paper is as follows. In Section \ref{sec:prelim} we establish some preliminaries and background.
Section \ref{sec:range} states carefully the main theorem in this paper which is Theorem \ref{thm:range_unitary}. The application to symmetric tensors is discussed in Section \ref{sec:tensor}. Section \ref{sec:adj} deals with adjoints of the attenuated ray transform and surjectivity properties.
Finally Section \ref{sec:proofmain} contains the proof of Theorem \ref{thm:range_unitary}.

\subsection*{Acknowledgements}
We thank Gareth Ainsworth for discussions related to Theorem \ref{thm:surjectiveon1forms} below.

M.S. was supported in part by the Academy of Finland and an ERC starting grant, and G.U. was partly supported by NSF and a Walker Family Endowed Professorship. G.P.P. thanks the University of Washington and the University of Jyv\"askyl\"a for hospitality while this work was being carried out.

\section{Preliminaries}\label{sec:prelim}

In this section we collect various facts which will be used later on.  We also introduce the scattering data of a unitary connection and we review a regularity result for the transport equation obtained in \cite{PSU}. See \cite{PSU}, \cite{PSU_tensor} for further details. In what follows $(M,g)$ is a compact orientable simple surface and $A$ is a unitary connection as defined in the Introduction.

\subsection{Scattering data of a unitary connection} As before, $X$ denotes the vector field associated with the geodesic flow
$\phi_t$ acting on $SM$ and we look at the unique solution $U_{A}:SM\to U(n)$ of

\begin{equation}
\left\{\begin{array}{ll}
XU_{A}+A(x,v)U_{A}=0,\;(x,v)\in SM\\
U_{A}|_{\partial_{+}(SM)}=\id.\\
\end{array}\right.
\label{eq:4}
\end{equation}
The scattering data of the connection $A$ is the map $C_{A}:\partial_{-}(SM)\to U(n)$ defined as $C_{A}:=U_{A}|_{\partial_{-}(SM)}$. It is easy to check that the scattering data has the following gauge invariance: if $G:M\to U(n)$ is a smooth map such that $G|_{\partial M}=\mbox{\rm Id}$, then the unitary connection $G^{-1}dG+G^{-1}AG$ has the same scattering data as $A$.
The scattering data $C_{A}$ determines $A$ up to this gauge equivalence \cite{PSU}.

Similarly, we can consider the unique matrix solution of 
\[\left\{\begin{array}{ll}
XW_{A}+A(x,v)W_{A}=0,\;(x,v)\in SM\\
W_{A}|_{\partial_{-}(SM)}=\id\\
\end{array}\right.\]
and this gives a corresponding scattering map $D_{A}:\partial_{+}(SM)\to U(n)$ defined as $D_{A}:=W_{A}|_{\partial_{+}(SM)}$. However, it is straightforward to check that $C_{A}$ and $D_{A}$ are related by the scattering relation $\alpha$ of the metric $g$: 
$$
C_{A}^{-1}\circ\alpha=D_{A}.
$$
Recall that $\alpha$ may be extended as a $C^{\infty}$ diffeomorphism of $\partial(SM)$ \cite{PU}.

Using the fundamental solution $U_{A}$ it is possible to give an integral representation for
$I_{A}$ acting on functions $f\in C^{\infty}(SM,\C^n)$.  To make this more transparent consider first the case $n=1$.  By explicitly solving the transport equation along the geodesic determined by $(x,v)\in\partial_{+}(SM)$,
\[\dot{u}+A(\phi_{t}(x,v))u=-f(\phi_{t}(x,v))\]
with $u(\tau(x,v))=0$ we find that
\[I_{A}(f)(x,v)=u(0)= \int_0^{\tau(x,v)} f(\phi_t(x,v)) \text{exp}\left[ \int_0^t A(\phi_s(x,v)) \,ds \right] dt.\]
In the scalar case $n=1$ we also see that if $(x,v)\in\partial_{+}(SM)$, then 
$$U_{A}(\phi_{t}(x,v))=\text{exp}\left[ -\int_0^t A(\phi_s(x,v)) \,ds \right] \in S^{1}=U(1).$$
In general ($n\geq 2$) a simple calculation shows that the following integral formula holds:
\begin{equation}\label{eq:xray}
I_{A} (f)(x,v) = \int_{0}^{\tau(x,v)} U_{A}^{-1}(\phi_{t}(x,v))f(\phi_{t}(x,v))\,dt.
\end{equation}

\subsection{Geometry of $SM$ and the fibrewise Hilbert transform}
Since $M$ is assumed oriented there is a circle action on the fibres of $SM$ with infinitesimal generator $V$ called the {\it vertical vector field}. It is possible to complete the pair $X,V$ to a global frame
of $T(SM)$ by considering the vector field $X_{\perp}:=[X,V]$, where $[\cdot,\cdot]$ stands for the Lie bracket or commutator of two vector fields. There are two additional structure equations given by $X=[V,X_{\perp}]$ and $[X,X_{\perp}]=-KV$
where $K$ is the Gaussian curvature of the surface. Using this frame we can define a Riemannian metric on $SM$ by declaring $\{X,X_{\perp},V\}$ to be an orthonormal basis and the volume form of this metric will be denoted by $d\Sigma^3$ and referred to as the {\it usual} volume form on $SM$. The fact that $\{ X, X_{\perp}, V \}$ are orthonormal together with the commutator formulas implies that the Lie derivative of $d\Sigma^3$ along the three vector fields vanishes, and consequently these vector fields are volume preserving.

Given functions $u,v:SM\to \C^n$ we consider the
inner product
\[(u,v) =\int_{SM} (u,v)_{\C^n}\,d\Sigma^3.\]
The space $L^{2}(SM,\C^n)$ decomposes orthogonally
as a direct sum
\[L^{2}(SM,\C^n)=\bigoplus_{k\in\mathbb Z}H_{k}\]
where $-iV$ acts as $k\,\mbox{\rm Id}$ on $H_k$.
 Let $\Omega_{k}:=C^{\infty}(SM,\C^n)\cap H_{k}$. A smooth function $u:SM\to\C^n$ has a Fourier series expansion
\[u=\sum_{k=-\infty}^{\infty}u_{k}.\]

Following Guillemin and Kazhdan in \cite{GK} we introduce the following
first order elliptic operators 
$$\eta_{+},\eta_{-}:C^{\infty}(SM,\C^n)\to
C^{\infty}(SM,\C^n)$$ given by
\[\eta_{+}:=(X+iX_{\perp})/2,\;\;\;\;\;\;\eta_{-}:=(X-iX_{\perp})/2.\]
Clearly $X=\eta_{+}+\eta_{-}$. The commutation relations $[-iV,\eta_{+}]=\eta_{+}$ and
$[-iV,\eta_{-}]=-\eta_{-}$ imply that
\[\eta_{+}:\Omega_{k}\to \Omega_{k+1},\;\;\;\;\eta_{-}:\Omega_{k}\to \Omega_{k-1}.\]
These operators will be useful in Section \ref{sec:proofmain}.

 The description of the range of $I_{A}$ makes use of the fibrewise {\it Hilbert transform} $\H$.
This can be introduced in various ways (cf.~\cite{PU,SaU}), but here we simply indicate that it acts fibrewise and
 for $u_{k}\in \Omega_k$,
\[\H(u_{k})=-\mbox{\rm sgn}(k)\,iu_{k}\]
where we use the convention $\mbox{\rm sgn}(0)=0$.
Moreover, $\H(u)=\sum_{k}\H(u_{k})$.
Observe that
\[(\id+i\H)u=u_0+2\sum_{k=1}^{\infty}u_{k},\]
\[(\id-i\H)u=u_0+2\sum_{k=-\infty}^{-1}u_{k}.\]
The following bracket relation (cf. \cite{PSU}) extends the bracket in \cite{PU}:

\begin{equation}\label{eq:bracket}
[\H,X+A]u=(\X+\star A)(u_{0})+\{(\X+\star A)(u)\}_{0},
\end{equation}
where $\star$ denotes the Hodge star operator of the metric $g$ and $\{(\X+\star A)(u)\}_{0}$ indicates the zero Fourier component
of $(X_{\perp}+\star A)(u)$.

\subsection{Regularity results for the transport equation}

Given a smooth $w\in C^{\infty}(\partial_{+}(SM),\C^n)$ consider
the unique solution $w^{\sharp}:SM\to \C^n$
to the transport equation:
\[\left\{\begin{array}{ll}
X(w^{\sharp})+Aw^{\sharp}=0,\\
w^{\sharp}|_{\partial_{+}(SM)}=w.\\
\end{array}\right.\] 
Observe that
\[w^{\sharp}(x,v)=U_{A}(x,v)w(\alpha\circ\psi(x,v))\,\]
where $\psi(x,v):=\phi_{\tau(x,v)}(x,v)$ (recall that $\phi_{t}$ is the geodesic flow and $\tau(x,v)$ is the time it takes the geodesic determined by $(x,v)$ to exit $M$).
If we introduce the operator 
\[Q:C(\partial_{+}(SM),\C^n)\to C(\partial(SM),\C^n)\]
by setting
\[Qw(x,v)=\left\{\begin{array}{ll}
w(x,v)&\mbox{\rm if}\;(x,v)\in\partial_{+}(SM)\\
C_{A}(x,v)(w\circ\alpha)(x,v)&\mbox{\rm if}\;(x,v)\in\partial_{-}(SM),\\
\end{array}\right.\] 
then
\[w^{\sharp}|_{\partial(SM)}=Qw.\]
Define
\[\mathcal S^{\infty}(\partial_{+}(SM),\C^n):=\{w\in C^{\infty}(\partial_{+}(SM),\C^n):\;w^{\sharp}\in C^{\infty}(SM,\C^n)\}.\]

We have

\begin{Lemma}[\cite{PSU}] The set of those smooth $w$ such that $w^{\sharp}$ is smooth
is given by 
\[\mathcal S^{\infty}(\partial_{+}(SM),\C^n)=\{w\in C^{\infty}(\partial_{+}(SM),\C^n):\;Qw\in C^{\infty}(\partial(SM),\C^n)\}.\]
\label{lemma:ss}
\end{Lemma}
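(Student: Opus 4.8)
\emph{Necessity} is immediate: if $w^{\sharp}\in C^{\infty}(SM,\C^n)$, then its restriction to $\partial(SM)$, which equals $Qw$ by the identity $w^{\sharp}|_{\partial(SM)}=Qw$ recorded above, is automatically smooth on $\partial(SM)$. So the entire content is the converse, and the plan is to show that smoothness of the boundary trace $Qw$ on the \emph{closed} manifold $\partial(SM)$ forces $w^{\sharp}$ to be smooth up to and across the glancing region.

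\emph{Localizing the difficulty.} First I would observe that $w^{\sharp}$ solves the smooth linear transport equation $Xw^{\sharp}+Aw^{\sharp}=0$, so smoothness is never lost in the interior: by strict convexity a maximal geodesic in $M$ meets $\partial M$ transversally except at a glancing point (a tangent geodesic curves out of $M$ and does not enter the interior), so the entry map and the exit time $\tau$ are smooth on $\mathrm{int}(SM)$, and $w^{\sharp}$ there is the smooth datum $w$ propagated by the smooth fundamental solution $U_{A}$. Likewise $w^{\sharp}$ extends smoothly up to $\partial_{+}(SM)\setminus\partial_{0}(SM)$, where it equals $w$, and up to $\partial_{-}(SM)\setminus\partial_{0}(SM)$, where it equals $C_{A}\,(w\circ\alpha)$ via $C_{A}^{-1}\circ\alpha=D_{A}$. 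Hence the only possible failure of smoothness is at the glancing region $\partial_{0}(SM)=\{(x,v):x\in\partial M,\ \langle v,\nu\rangle=0\}$, where $\tau$ and $\psi$ develop square-root singularities and transversality of the flow-out degenerates. The task reduces to patching the two one-sided smooth extensions of $w^{\sharp}$ across $\partial_{0}(SM)$.

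\emph{Resolving the glancing by extension.} To remove the degeneracy I would embed $(M,g)$ into a slightly larger simple surface $(\widetilde M,\widetilde g)$ with $M\subset\mathrm{int}\,\widetilde M$ and extend $A$ to a smooth unitary connection on $\widetilde M$. Every geodesic of $M$ tangent to $\partial M$ now extends to a geodesic crossing the collar $\widetilde M\setminus M$ transversally inside $\mathrm{int}\,\widetilde M$, so relative to $\partial\widetilde M$ there is no glancing along geodesics passing through $M$, and the flow-out map on $S\widetilde M$ near $SM$ is a diffeomorphism. The plan is then to use the hypothesis that $Qw$ is smooth on the closed $2$-torus $\partial(SM)$ to manufacture smooth influx data $\widetilde w$ on $\partial_{+}(S\widetilde M)$ whose associated solution $\widetilde w^{\sharp}$ of $X\widetilde w^{\sharp}+A\widetilde w^{\sharp}=0$ restricts on $SM$ to $w^{\sharp}$; since $\widetilde w^{\sharp}$ meets no glancing obstruction in a neighborhood of $SM$, it is smooth there, whence $w^{\sharp}=\widetilde w^{\sharp}|_{SM}$ is smooth. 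A conceptually economical way to organize this is the integrating factor $p:=U_{A}^{-1}w^{\sharp}$, which satisfies the \emph{unattenuated} equation $Xp=0$; since $U_{A}$ is smooth and $U(n)$-valued (the matrix-valued case $w=\id$, handled by the same embedding argument, equivalently the smoothness of the scattering data $C_{A}$ from \cite{PSU}), $w^{\sharp}$ is smooth iff $p$ is, and $Qw$ is smooth iff $p|_{\partial(SM)}=(U_{A}|_{\partial(SM)})^{-1}Qw$ is. Thus it suffices to treat $A=0$, where $p$ is a first integral of the geodesic flow glued from $w$ and $w\circ\alpha$ through the scattering relation.

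\emph{Main obstacle.} The genuine difficulty is precisely the glancing analysis: proving that smoothness of the trace on the closed manifold $\partial(SM)$ is exactly the compatibility condition needed to glue the two one-sided extensions across $\partial_{0}(SM)$ into a function smooth on the manifold-with-boundary $SM$. This is where strict convexity enters through the $\sqrt{\cdot}$ behaviour of $\tau$ near $\partial_{0}(SM)$, and where one must use that $\alpha$ extends to a smooth diffeomorphism of $\partial(SM)$ fixing $\partial_{0}(SM)$ so that the even/fold structure of the flow-out is respected. I expect the careful Whitney-type matching at the fold, rather than the transport equation or the role of $A$ (both of which are benign), to be the crux of the argument.
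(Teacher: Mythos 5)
Your easy direction and the localization of the difficulty to the glancing region $\partial_0(SM)$ are correct, and your overall strategy (extend to a larger simple surface, use an integrating factor to reduce to $A=0$) is in fact the route taken in the reference: the paper itself gives no proof but quotes \cite{PSU}, where one embeds $(M,g)$ into the interior of a slightly larger simple surface $(\widetilde M,\widetilde g)$, extends $A$ smoothly, and restricts the fundamental solution of the extended connection to $SM$; this yields a \emph{smooth} $U(n)$-valued solution $R$ of $(X+A)R=0$ on $SM$ (smooth because $SM$ lies in the interior of $S\widetilde M$, where backward exit times for $\widetilde M$ are smooth), after which $w^{\sharp}=Rp$ with $Xp=0$ reduces everything to the unattenuated case, settled by citing Pestov--Uhlmann \cite{PU}. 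However, your version of this reduction contains an outright error: you claim $U_A$ itself ``is smooth,'' justified by ``the same embedding argument, equivalently the smoothness of the scattering data $C_A$.'' This is false in general, and the embedding argument does not give it: $\widetilde U_A|_{SM}$ is smooth but its boundary value on $\partial_+(SM)$ is not $\mathrm{Id}$, so it is not $U_A$. Smoothness of $U_A$ is equivalent to smoothness of $Q(\mathrm{Id})$ (equal to $\mathrm{Id}$ on $\partial_+(SM)$ and $C_A$ on $\partial_-(SM)$), i.e.\ to $\mathrm{Id}\in\mathcal S^{\infty}$ --- an instance of the very lemma being proved. Near $\partial_0(SM)$ one has $U_A=\mathrm{Id}-\tau(x,-v)\,A(x,v)+O(\tau^2)$, and $\tau(x,-v)$ has square-root behaviour there, so $U_A\notin C^{\infty}(SM)$ whenever $A$ does not vanish on $\partial_0(SM)$; smoothness of $C_A$ on $\partial_-(SM)$ alone does not repair the derivative mismatch across $\partial_0(SM)$. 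The fix is precisely to use $R$ in place of $U_A$: then $p:=R^{-1}w^{\sharp}$ and $R|_{\partial(SM)}$ is smooth and invertible, so your equivalences go through verbatim.

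The more serious issue is that, after reducing to $A=0$, you never prove the unattenuated statement, and your proposed mechanism for doing so is circular. To ``manufacture smooth influx data $\widetilde w$ on $\partial_+(S\widetilde M)$ whose solution restricts to $w^{\sharp}$,'' the only natural construction is to push $Qw$ back along the flow via the first-hit map from $\partial_+(S\widetilde M)$ to $\partial(SM)$ --- and that map has exactly the square-root singularity at geodesics tangent to $\partial M$ that you are trying to eliminate; proving that the pulled-back data is smooth across those directions \emph{is} the content of the lemma. Your final paragraph then names the fold/Whitney-type matching (using $\alpha$-invariance of $Qw$, strict convexity, and an even-function theorem at the fold) as ``the crux,'' but offers no argument for it. So as written the proof is incomplete: you must either carry out that glancing analysis or invoke the unattenuated result \cite{PU}, which is exactly how \cite{PSU}, and hence this paper, closes the argument.
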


\section{Description of the range}\label{sec:range}

In this section we describe the main result of this paper (Theorem \ref{thm:range_unitary} below). Let $(M,g)$ be a simple surface and $A$ a unitary connection. We shall consider $I_{A}$ acting on $0$-forms and 1-forms separately and for this we shall use the notation $I_{A}^{0}$ and $I_{A}^{1}$ respectively (as in the Introduction).

Suppose $f=(X+A)g$ for some smooth function $g$. Then clearly
$X(u^f+g)+A(u^f+g)=0$, where $u^f$ solves $Xu+Au=-f$ with $u|_{\partial_{-}(SM)}=0$.  Since $(u^f+g)|_{\partial_{-}(SM)}=g|_{\partial_{-}(SM)}$ we deduce that $(u^f+g)|_{\partial_{+}(SM)}=(C_{A}^{-1}g)\circ\alpha$. In other words
\begin{equation}
I_{A}((X+A)g)=[(C_{A}^{-1}g)\circ\alpha-g]|_{\partial_{+}(SM)}.
\label{eq:ftc}
\end{equation}
Motivated by this we introduce the operator
\[B:C(\partial(SM),\C^n)\to C(\partial_{+}(SM),\C^n)\]
defined by
\[Bg:=[(C_{A}^{-1}g)\circ\alpha-g]|_{\partial_{+}(SM)}\]
so we can write (\ref{eq:ftc}) as
\begin{equation}
I_{A}((X+A)g)=Bg^0,\;\;g^0=g|_{\partial(SM)}.
\label{eq:ftc2}
\end{equation}

Let $\H_{\pm}u:=\H u_{\pm}$, where $u_{+}$ (resp. $u_{-}$) denotes the even (resp. odd) part of $u$ with respect to the $v$ variable. In terms of Fourier coefficients, $u_+$ consists of the Fourier components of $u$ having even degree and $u_-$ of the components having odd degree.

\begin{Definition} Let $P_{\pm}:\mathcal S^{\infty}(\partial_{+}(SM),\C^n)\to C^{\infty}(\partial_{+}(SM),\C^n)$ be the operators defined by $P_{\pm}=B\H_{\pm}Q$.
\end{Definition}
Note that the operators $P_{\pm}$ are completely determined by the scattering data $(\alpha,C_{A})$
and the metric at the boundary.

The connection $A$ induces an operator $d_{A}$ acting on $\C^n$-valued smooth differential forms on $M$ by the formula $d_{A}\alpha=d\alpha+A\wedge \alpha$.  We let
$d_{A}^*$ be its natural adjoint with respect to the usual $L^2$ inner product of forms. We have $d^*_{A}=-\star d_{A}\star$.
We let $\mathfrak H_{A}$ denote the space of all 1-forms $\alpha$ with $d_{A}\alpha=d_{A}^*\alpha=0$ and $j^*\alpha=0$ where $j:\partial M\to M$ is the inclusion map. We call these forms {\it $A$-harmonic}. 
This is a finite dimensional space since we are dealing with an elliptic system
with a regular boundary condition (cf. \cite[Section 5.11]{Ta}).
Here is our main result:

\begin{Theorem} \label{thm:range_unitary} Let $(M,g)$ be a simple surface and $A$ a unitary connection. Then
\begin{enumerate}
\item A function $u\in C^{\infty}(\partial_{+}(SM),\C^n)$ belongs to the range of $I_{A}^{0}$ if and only if $u=P_{-}w$ for $w\in \mathcal S^{\infty}(\partial_{+}(SM),\C^n)$.
\item A function $u\in C^{\infty}(\partial_{+}(SM),\C^n)$ belongs to the range of $I_{A}^{1}$ if and only if $u=P_{+}w+I_{A}^{1}(\eta)$ for $w\in \mathcal S^{\infty}(\partial_{+}(SM),\C^n)$ and $\eta\in \mathfrak{H}_{A}$.
\end{enumerate}
\end{Theorem}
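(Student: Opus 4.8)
The plan is to reduce both parts to a single identity and then to split everything according to parity in the fibre variable. Given $w\in\mathcal S^{\infty}(\partial_{+}(SM),\C^n)$ the function $w^{\sharp}$ is smooth and solves $(X+A)w^{\sharp}=0$. Since $\H$ acts fibrewise we have $\H_{\pm}Qw=(\H_{\pm}w^{\sharp})|_{\partial(SM)}$, so applying (\ref{eq:ftc2}) with $p=\H_{\pm}w^{\sharp}$ gives
\[P_{\pm}w=B\big((\H_{\pm}w^{\sharp})|_{\partial(SM)}\big)=I_{A}\big((X+A)\H_{\pm}w^{\sharp}\big).\]
Everything therefore reduces to computing $(X+A)\H_{\pm}w^{\sharp}$, and this is exactly the content of the commutator formula (\ref{eq:bracket}). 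I would also use that $X+A$ reverses parity in $v$ (both $X$ and multiplication by $A$ shift the Fourier degree by $\pm1$), so that $(X+A)w^{\sharp}=0$ splits into $(X+A)w^{\sharp}_{+}=0$ and $(X+A)w^{\sharp}_{-}=0$ separately.

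First I would dispatch the two easy inclusions. Feeding the odd part $w^{\sharp}_{-}$, whose zeroth Fourier mode vanishes, into (\ref{eq:bracket}) and using $(X+A)w^{\sharp}_{-}=0$ yields $(X+A)\H_{-}w^{\sharp}=-\{(\X+\star A)w^{\sharp}_{-}\}_{0}$, which lies in $H_{0}$ and is thus a genuine $0$-form; hence $P_{-}w=I_{A}^{0}(-\{(\X+\star A)w^{\sharp}_{-}\}_{0})$ belongs to the range of $I_{A}^{0}$. Feeding the even part $w^{\sharp}_{+}$ in the same way gives $(X+A)\H_{+}w^{\sharp}=-(\X+\star A)(w^{\sharp}_{0})$, which lies in $H_{-1}\oplus H_{1}$ and is therefore a $1$-form, so $P_{+}w$ lies in the range of $I_{A}^{1}$; since each $\eta\in\mathfrak H_{A}$ is itself a $1$-form, $I_{A}^{1}(\eta)$ trivially lies in that range as well. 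This proves the ``if'' directions of (1) and (2).

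The substance of the theorem is the converse. Combining the displayed identity with Theorem \ref{thm:injective_A} (which shows $I_{A}^{0}$ is injective on $0$-forms and identifies $\ker I_{A}^{1}$ with the $d_{A}$-potentials vanishing on $\partial M$) reduces matters to two surjectivity statements. For (1) I must show that as $w$ ranges over $\mathcal S^{\infty}$, the $0$-form $\{(\X+\star A)w^{\sharp}_{-}\}_{0}$ sweeps out all of $C^{\infty}(M,\C^n)$; equivalently, as $\beta$ ranges over the smooth odd solutions of $(X+A)\beta=0$, the expression $\{(\X+\star A)\beta\}_{0}$ is onto the $0$-forms. I would establish this by the duality/adjoint analysis of Section \ref{sec:adj}: the formal adjoint of the construction is injective precisely by Theorem \ref{thm:injective_A}, giving dense range, and ellipticity of $\eta_{\pm}$ together with the regularity criterion of Lemma \ref{lemma:ss} upgrades this to an actual smooth $w\in\mathcal S^{\infty}$ realizing any prescribed $0$-form. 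For (2) the analogous map is $w^{\sharp}_{0}\mapsto(\X+\star A)(w^{\sharp}_{0})$, which under the correspondence between functions on $SM$ and forms on $M$ produces exactly the coexact ($\star d_{A}$-type) $1$-forms; its cokernel in the space of $1$-forms modulo $d_{A}$-potentials is then the finite-dimensional space $\mathfrak H_{A}$. This is the origin of the extra summand: using the Hodge-type decomposition attached to the elliptic boundary value problem $d_{A}\alpha=d_{A}^{*}\alpha=0$, $j^{*}\alpha=0$, every $1$-form is a $d_{A}$-potential plus a coexact form plus an $A$-harmonic form, so every element of the range of $I_{A}^{1}$ is $P_{+}w$ plus a term $I_{A}^{1}(\eta)$ with $\eta\in\mathfrak H_{A}$.

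The main obstacle is precisely this surjectivity, in two respects. First, one must verify that the adjoint of the transport/Hilbert-transform construction is injective \emph{and} has closed range, so that duality yields genuine surjectivity rather than merely dense range; this is where Theorem \ref{thm:injective_A} enters in an essential, non-formal way, and where the finite-dimensionality furnished by the elliptic boundary condition is used. Second, in the $1$-form case one must pin down the cokernel as exactly $\mathfrak H_{A}$, neither larger nor smaller: this means identifying the $1$-forms of the form $(\X+\star A)(w^{\sharp}_{0})$ with the coexact part of the decomposition and checking that the $A$-harmonic forms genuinely survive as independent elements of the range through $I_{A}^{1}$ (rather than being absorbed into $\ker I_{A}^{1}$). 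Throughout, the regularity Lemma \ref{lemma:ss} is needed to guarantee that the $w$ produced by the duality argument actually lies in $\mathcal S^{\infty}$, i.e. that the associated $w^{\sharp}$ is smooth up to the boundary.
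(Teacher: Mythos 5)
Your algebraic skeleton coincides with the paper's. The identity $P_{\pm}w=I_{A}\bigl((X+A)\H_{\pm}w^{\sharp}\bigr)$, the application of the bracket (\ref{eq:bracket}) to the even and odd parts of $w^{\sharp}$ (using that $X+A$ reverses parity), and the resulting formulas $(X+A)\H_{+}w^{\sharp}=-\star d_{A}(w^{\sharp}_{0})$ and $(X+A)\H_{-}w^{\sharp}=-\tfrac{1}{2}\star d_{A}(w^{\sharp}_{1}+w^{\sharp}_{-1})$ are exactly the content of Proposition \ref{prop:factorization}; your ``if'' directions and your use of a Hodge-type decomposition to account for the summand $I_{A}^{1}(\eta)$, $\eta\in\mathfrak{H}_{A}$, also match the paper's argument. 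However, the converse directions rest on surjectivity statements that you correctly flag as ``the main obstacle'' but then do not prove, and the mechanisms you gesture at are not the ones that can close the gap. Abstract duality from the injectivity in Theorem \ref{thm:injective_A} gives at best dense range; the closed-range property does \emph{not} come from ``the finite-dimensionality furnished by the elliptic boundary condition'' (that elliptic boundary value problem, Lemma \ref{lemma:hodgediracconnection}, only controls the decomposition of $1$-forms and the space $\mathfrak{H}_{A}$), nor from ellipticity of $\eta_{\pm}$. The paper's actual engine for Theorem \ref{thm:surjective} is Lemma \ref{lemma:elliptic}: the normal operator $N_{0,A}=I_{0,A}^{*}I_{0,A}$ is an elliptic $\Psi DO$ of order $-1$ (on a slightly larger simple surface), and it is this ellipticity combined with injectivity, via the Pestov--Uhlmann argument, that yields genuine surjectivity of $(I_{A}^{0})^{*}$ in the smooth category with $w\in\mathcal S^{\infty}$.

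For part (1) the gap is even wider, because the range of $(I_{A}^{1})^{*}$ is constrained: if $(X+A)w^{\sharp}=0$, the degree-zero component of the transport equation gives $\mu_{+}(w^{\sharp}_{-1})+\mu_{-}(w^{\sharp}_{1})=0$, i.e.\ $d_{A}\star\beta=0$ for $\beta=w^{\sharp}_{-1}+w^{\sharp}_{1}$, so only \emph{solenoidal} $1$-forms can be realized. Consequently two separate results are needed which your single duality claim does not supply: (i) surjectivity of $(I_{A}^{1})^{*}$ onto solenoidal $1$-forms (Theorem \ref{thm:surjectiveon1forms}), which the paper proves not by duality but by a reduction using the nowhere-vanishing section $h\in\Omega_{1}$, twisting the connection to $A-ma\,\id$ and invoking Lemma \ref{lemma:sobre}; and (ii) the solvability statement of Lemma \ref{lemma:CR} item (2): given $b\in C^{\infty}(M,\C^n)$, produce a solenoidal $\beta$ with $\star d_{A}\beta=b$, which the paper obtains from a semiglobal solvability result for $-\Delta_{A}+Q$ proved via Carleman estimates (Lemma \ref{lemma:deltaA_surjective}). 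Neither ingredient, nor any substitute, appears in your proposal; without them the map $w\mapsto\{(\X+\star A)w^{\sharp}_{-}\}_{0}$ cannot be shown to sweep out all of $C^{\infty}(M,\C^n)$, and the characterization of the range of $I_{A}^{0}$ remains unproved.
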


When $A=0$ we have $\mathfrak{H}_{A}=0$ since $M$ is a disk. We will show below
that $\mathfrak{H}_{A}$ may also be expressed in terms of a suitable Cauchy-Riemann operator
with totally real boundary conditions.

\section{Range of the ray transform acting on symmetric tensors}\label{sec:tensor}

In this section we apply  Theorem \ref{thm:range_unitary} to obtain a description of the range of the ray transform acting on symmetric tensors.

Let $(M,g)$ be a simple surface. The metric $g$ induces a complex structure on $M$ and we let $\kappa$ be the canonical line bundle (which we may identify with $T^*M$).
Recall that $H_m$ ($m\in\Z$) is the set of functions in $f\in L^{2}(SM,\C)$ such that
$Vf=imf$. 
The set $\Omega_m=H_{m}\cap C^{\infty}(SM,\C)$ can be identified with the set $\Gamma(M,\kappa^{\otimes m})$ of smooth sections of $m$-th tensor power of the canonical line bundle $\kappa$. This identification depends on the metric and is explained in detail in \cite[Section 2]{PSUa}, but let us give a brief description of it.  Given a section $\xi\in \Gamma(M,\kappa^{\otimes m})$ we can obtain a function on $\Omega_m$ simply by restriction to $SM$: $\xi$ determines the function $SM\ni (x,v)\mapsto \xi_{x}(v^{\otimes m})$ and this gives a 1-1 correspondence. 

Since $M$ is a disk, there is $\xi\in  \Gamma(M,\kappa)$ which is nowhere vanishing. Having picked this section we may define a function 
$h:SM\to S^{1}$ by setting $h(x,v)=\xi_{x}(v)/|\xi_{x}(v)|$.  By construction $h\in\Omega_{1}$.  Our description of the range will be based on this choice of $h$.
Define
$$
A_{\xi,g}=A := -h^{-1} Xh.
$$
Observe that since $h\in\Omega_1$, then $h^{-1}=\bar{h}\in\Omega_{-1}$. Also $Xh=\eta_{+}h+\eta_{-}h\in\Omega_{2}\oplus\Omega_{0}$ which implies that $A\in\Omega_{1}\oplus\Omega_{-1}$. It follows that $A$ is the restriction to $SM$ of a purely imaginary 1-form on $M$. Hence we have a unitary connection for the scalar case $n=1$.

First we will describe the range of the geodesic ray transform $I$ restricted to $\Omega_{m}$:
$$I_{m}:=I|_{\Omega_{m}}:\Omega_{m}\to C^{\infty}(\partial_{+}(SM),\C).$$
Observe that if $u$ solves the transport equation $Xu=-f$ with $u|_{\partial_{-}(SM)}=0$, then
$h^{-m}u$ solves $(X-mA)(h^{-m}u)=-h^{-m}f$ and $h^{-m}u|_{\partial_{-}(SM)}=0$.
Also note that $h^{-m}f\in\Omega_{0}$.
Thus
\begin{equation}\label{eq:relation}
I_{-mA}^{0}(h^{-m}f)=\left(h^{-m}|_{\partial_{+}(SM)}\right)I_{m}(f).
\end{equation}
But the range of the left-hand side of (\ref{eq:relation}) was described in Theorem \ref{thm:range_unitary}. We can be a bit more explicit in the scalar case. We have
\[Q_{m}w(x,v)=\left\{\begin{array}{ll}
w(x,v)&\mbox{\rm if}\;(x,v)\in\partial_{+}(SM)\\
(e^{-m\int_{0}^{\tau(x,v)}A(\phi_{t}(x,v))\,dt}w)\circ\alpha(x,v)&\mbox{\rm if}\;(x,v)\in\partial_{-}(SM)\\
\end{array}\right.\] 
and
\[B_{m}g=[e^{m\int_{0}^{\tau(x,v)}A(\phi_{t}(x,v))\,dt}(g\circ\alpha)-g]|_{\partial_{+}(SM)}.\]
In other words:
\[Q_{m}w(x,v)=\left\{\begin{array}{ll}
w(x,v)&\mbox{\rm if}\;(x,v)\in\partial_{+}(SM)\\
(e^{-mI_{1}(A)}w)\circ\alpha(x,v)&\mbox{\rm if}\;(x,v)\in\partial_{-}(SM)\\
\end{array}\right.\] 
and
\[B_{m}g=[e^{mI_{1}(A)}(g\circ\alpha)-g]|_{\partial_{+}(SM)}.\]
As before we set
\[P_{m,-}=B_{m}\H_{-}Q_{m}.\]
Directly from (\ref{eq:relation}) and Theorem \ref{thm:range_unitary} we derive:

\begin{Theorem} \label{thm:im} Let $(M,g)$ be a simple surface. Then a function $u\in C^{\infty}(\partial_{+}(SM),\C)$ belongs to the range of $I_{m}$ if and only if $u=\left(h^{m}|_{\partial_{+}(SM)}\right)P_{m,-}w$ for $w\in \mathcal S_{m}^{\infty}(\partial_{+}(SM),\C)$, where this last space denotes the set of all smooth $w$ such that $Q_{m}w$ is smooth.

\end{Theorem}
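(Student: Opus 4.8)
The plan is to obtain Theorem \ref{thm:im} as a direct translation of part (1) of Theorem \ref{thm:range_unitary}, applied to the scalar connection $-mA$, carried across the multiplication by powers of $h$ recorded in \eqref{eq:relation}. The first observation is that multiplication by $h^{-m}$ is a bijection $\Omega_m\to\Omega_0$: since $h\in\Omega_1$ is smooth and nowhere vanishing, $h^{-m}=\bar h^{\,m}\in\Omega_{-m}$ is smooth, and $f\mapsto h^{-m}f$ maps $\Omega_m$ onto $\Omega_0=C^{\infty}(M,\C)$ with inverse $g\mapsto h^m g$. Hence, as $f$ ranges over $\Omega_m$, the function $h^{-m}f$ ranges over all $0$-forms, and \eqref{eq:relation} shows that the range of $I^0_{-mA}$ acting on $0$-forms equals $(h^{-m}|_{\partial_+(SM)})\cdot\mathrm{range}(I_m)$. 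Equivalently, $u\in C^{\infty}(\partial_+(SM),\C)$ lies in the range of $I_m$ if and only if $h^{-m}u$ (restricted to $\partial_+(SM)$) lies in the range of $I^0_{-mA}$.

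Next I would apply Theorem \ref{thm:range_unitary}(1) to the unitary connection $-mA$, which is legitimate as $-mA$ is again a genuine scalar unitary connection ($n=1$). This yields: $h^{-m}u$ is in the range of $I^0_{-mA}$ if and only if $h^{-m}u=P_-w$ for some $w$ in the smooth scattering space $\mathcal S^{\infty}(\partial_+(SM),\C)$ attached to $-mA$, where $P_-=B\H_-Q$ is assembled from the scattering data of $-mA$. By Lemma \ref{lemma:ss} this space is exactly the set of smooth $w$ with $Qw$ smooth, which for the connection $-mA$ is precisely the space $\mathcal S^{\infty}_m(\partial_+(SM),\C)$ of smooth $w$ with $Q_m w$ smooth appearing in the statement. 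It then remains to identify the operators $Q$, $B$, $P_-$ for $-mA$ with the explicit operators $Q_m$, $B_m$, $P_{m,-}=B_m\H_-Q_m$ written down before the theorem. Granting this identification, the equivalence $h^{-m}u=P_{m,-}w$ becomes, upon multiplying by $h^m$ and restricting to $\partial_+(SM)$, the assertion $u=(h^m|_{\partial_+(SM)})\,P_{m,-}w$; smoothness is automatic since $h^{\pm m}$ is smooth and nowhere vanishing on $SM$ and $P_{m,-}w\in C^{\infty}(\partial_+(SM),\C)$. This is exactly the conclusion of the theorem.

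I expect the only genuine work to lie in that last identification of operators. Concretely, in the scalar case the fundamental solution of $-mA$ is $U_{-mA}(\phi_t(x,v))=\exp\!\big[m\int_0^t A(\phi_s(x,v))\,ds\big]$, so its boundary value is the scattering datum $C_{-mA}$, and along geodesics this produces precisely the exponential factors $e^{\mp m I_1(A)}$ displayed in $Q_m$ and $B_m$. The care needed here is purely bookkeeping: tracking the correct point of evaluation under the scattering relation $\alpha$ (and the accompanying direction flip) to match signs and compositions, so that one confirms $Q=Q_m$, $B=B_m$, and therefore $P_-=P_{m,-}$ for the connection $-mA$. Everything else in the argument—the bijectivity of multiplication by $h^{-m}$, the passage through \eqref{eq:relation}, and the final restriction to $\partial_+(SM)$—is routine.
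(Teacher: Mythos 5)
Your proposal follows exactly the paper's route: the paper also derives Theorem \ref{thm:im} by combining \eqref{eq:relation} with Theorem \ref{thm:range_unitary}(1) applied to the scalar unitary connection $-mA$, and in fact its entire proof is the sentence ``Directly from \eqref{eq:relation} and Theorem \ref{thm:range_unitary} we derive'' together with the displayed scalar formulas. Your supporting observations --- that $f\mapsto h^{-m}f$ is a bijection $\Omega_m\to\Omega_0$ (so the relation really does identify the two ranges, not just one inclusion), that $-mA$ is again a unitary connection because $A$ is purely imaginary, and that Lemma \ref{lemma:ss} identifies the relevant scattering space --- are precisely the justifications that make the one-line derivation legitimate.

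However, the step you defer as ``purely bookkeeping'' does not come out as the confirmation you assert. From your own (correct) formula $U_{-mA}(\phi_t(x,v))=\exp\left[m\int_0^t A(\phi_s(x,v))\,ds\right]$, the scattering datum of $-mA$ is $C_{-mA}=\bigl(e^{mI_1(A)}\bigr)\circ\alpha$ on $\partial_-(SM)$, so the operators of Sections \ref{sec:prelim}--\ref{sec:range} specialized to the connection $-mA$ are $Qw=\bigl(e^{+mI_1(A)}w\bigr)\circ\alpha$ on $\partial_-(SM)$ and $Bg=[e^{-mI_1(A)}(g\circ\alpha)-g]|_{\partial_+(SM)}$: the exponents carry the \emph{opposite} signs from the paper's displayed $Q_m$ and $B_m$, which as written are the operators of the connection $+mA$. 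This looks like a sign typo in the paper's explicit scalar formulas, and it is harmless for your argument provided you use the correctly-signed operators; but the sign is not innocuous, since with the displayed signs one would be characterizing $\bigl(h^{2m}|_{\partial_+(SM)}\bigr)\cdot\mathrm{range}(I_{-m})$ rather than $\mathrm{range}(I_m)$ (the two connections $\pm mA$ are related by complex conjugation, which exchanges $I_m$ and $I_{-m}$). So in executing your plan you should either correct the signs in $Q_m$, $B_m$ or flag the discrepancy; your sentence claiming the computation ``produces precisely the exponential factors displayed'' is inconsistent with your own formula for $U_{-mA}$. With that one fix, your proof is complete and identical in structure to the paper's.
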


Suppose now $F$ is a complex-valued symmetric tensor of order $m$, and denote its restriction to $SM$ by $f$. Recall from \cite[Section 2]{PSU_tensor} that there is a 1-1 correspondence between 
complex-valued symmetric tensors of order $m$ and functions in $SM$ of the form
$f=\sum_{k=-m}^{m}f_k$ where $f_k\in\Omega_k$ and $f_k=0$ for all $k$ odd (resp. even) if
$m$ is even (resp. odd).

Since
\[I(f)=\sum_{k=-m}^{m}I_{k}(f_{k})\]
we deduce  directly from Theorem \ref{thm:im} the following.

\begin{Theorem} Let $(M,g)$ be a simple surface. If $m=2l$ is even, a function 
$u\in C^{\infty}(\partial_{+}(SM),\C)$ belongs to the range of the ray transform acting on complex-valued  symmetric $m$-tensors if and only if there are $w_{2k}\in \mathcal S_{2k}^{\infty}(\partial_{+}(SM),\C)$
such that
\[u=\sum_{k=-l}^{l}\left(h^{2k}|_{\partial_{+}(SM)}\right)P_{2k,-}w_{2k}.\]
Similarly, if $m=2l+1$ is odd, a function 
$u\in C^{\infty}(\partial_{+}(SM),\C)$ belongs to the range of the ray transform acting on complex-valued  symmetric $m$-tensors if and only if there are $w_{2k+1}\in \mathcal S_{2k+1}^{\infty}(\partial_{+}(SM),\C)$
such that
\[u=\sum_{k=-l-1}^{l}\left(h^{2k+1}|_{\partial_{+}(SM)}\right)P_{2k+1,-}w_{2k+1}.\]
\label{thm:rangetensor}

\end{Theorem}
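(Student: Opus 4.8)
The plan is to deduce everything from Theorem \ref{thm:im}, which already describes the range of $I_m$ on a single Fourier degree $\Omega_m$, by combining it with the two structural facts recalled immediately above the statement: (i) the 1-1 correspondence that identifies a complex-valued symmetric $m$-tensor $F$ with a function $f=\sum_{k=-m}^{m}f_k$ on $SM$, where $f_k\in\Omega_k$ and the nonzero components occur only at degrees $k$ of the same parity as $m$; and (ii) the additivity $I(f)=\sum_{k=-m}^{m}I_k(f_k)$, valid since $I$ is linear and $I_k=I|_{\Omega_k}$.

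For the ``only if'' direction I would take $u=I(F)$ in the range, pass to the associated $f=\sum_k f_k$, and apply Theorem \ref{thm:im} degree by degree. Each $f_k$ is a Fourier component of the smooth function $f$, hence a smooth element of $\Omega_k$, so Theorem \ref{thm:im} supplies $w_k\in\mathcal S_k^{\infty}(\partial_{+}(SM),\C)$ with $I_k(f_k)=\left(h^{k}|_{\partial_{+}(SM)}\right)P_{k,-}w_k$. Summing over the admissible degrees and relabeling them as $k=2j$ when $m=2l$ (resp. $k=2j+1$ when $m=2l+1$) yields exactly the stated expression; the index range $|j|\le l$ (resp. $-l-1\le j\le l$) is forced by the parity constraint together with $|k|\le m$.

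For the ``if'' direction I would reverse each step. Given data in the required classes, Theorem \ref{thm:im} says that every summand $\left(h^{k}|_{\partial_{+}(SM)}\right)P_{k,-}w_k$ lies in the range of $I_k$, so equals $I_k(f_k)$ for some smooth $f_k\in\Omega_k$. The finite sum $f:=\sum_k f_k$ is then smooth, supported in degrees of the correct parity with $|k|\le m$, so by the correspondence (i) it is the $SM$-restriction of a genuine complex-valued symmetric $m$-tensor $F$; additivity (ii) then gives $I(F)=\sum_k I_k(f_k)=u$, placing $u$ in the range.

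The argument is essentially bookkeeping, and the one point that genuinely needs care --- the step I expect to be the only real obstacle --- is the legitimacy of the converse: one must know that an arbitrary choice of components $f_k\in\Omega_k$ at the admissible degrees assembles into an honest symmetric $m$-tensor, with no hidden compatibility relations linking different Fourier degrees. This is precisely what the correspondence (i) from \cite[Section 2]{PSU_tensor} provides, so it suffices to cite it; all the analytic difficulty has already been absorbed into Theorem \ref{thm:im}, and ultimately into Theorem \ref{thm:range_unitary}.
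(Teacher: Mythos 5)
Your proof is correct and follows exactly the route the paper takes: the paper states the theorem as a direct consequence of Theorem \ref{thm:im}, using precisely the two facts you invoke --- the 1-1 correspondence between symmetric $m$-tensors and functions $\sum_{k=-m}^{m}f_k$ with components only at degrees of the parity of $m$, and the additivity $I(f)=\sum_k I_k(f_k)$. Your attention to the converse direction (that arbitrary admissible components $f_k$ reassemble into a genuine tensor via the correspondence from \cite[Section 2]{PSU_tensor}) is exactly the point that makes the one-line deduction legitimate.
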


\begin{Remark}{\rm The idea of reducing the study of the (unattenuated) ray transform on $m$-tensors to the study of a suitable attenuated transform on functions was first employed in \cite{PSU_tensor} to solve the tensor tomography problem on surfaces. The same idea can be used to study 
the attenuated ray transform on higher order tensors. The order of the tensor can be lowered by adding a suitable attenuation.
With this idea one could in particular give a description of the range of $I_{A}^{1}$ using just the range of $I_{A}^{0}$ and a section
$\xi$ as above. This allows a new formulation of the second item in Theorem \ref{thm:range_unitary} which does not involve $A$-harmonic forms.
We leave the details of this to the interested reader.

}
\end{Remark}

\section{Adjoints and surjectivity properties}\label{sec:adj}

Let $d\Sigma^3$ be the usual volume form in $SM$ and let $d\Sigma^2$ be the volume form on $\partial(SM)$. When we write
$L^2_{\mu}(\partial_{+}(SM),\C^n)$ we mean that we are considering
the Hermitian inner product of functions (sections) with respect to the measure
$d\mu(x,v)=\langle v,\nu(x)\rangle d\Sigma^2(x,v)$.

\begin{Lemma} The operator $I_{A}$ extends to a bounded operator
\[I_{A}:L^2(SM,\C^n)\to L^2_{\mu}(\partial_{+}(SM),\C^n).\]
\end{Lemma}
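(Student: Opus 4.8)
The plan is to establish the bound $\norm{I_A f}_{L^2_\mu(\partial_+(SM))} \le C \norm{f}_{L^2(SM)}$ for $f \in C^{\infty}(SM,\C^n)$ by a direct Cauchy--Schwarz estimate combined with Santal\'o's formula, and then to extend to all of $L^2$ by density. The two facts that make the argument work are the unitarity of the matrix $U_A$ (it takes values in $U(n)$) and the boundedness of the exit time $\tau$ on a simple surface.

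First I would start from the integral representation \eqref{eq:xray}, $I_A f(x,v) = \int_0^{\tau(x,v)} U_A^{-1}(\phi_t(x,v)) f(\phi_t(x,v)) \, dt$. Since $U_A^{-1}(\phi_t)$ is unitary, the integrand has pointwise norm $\abs{U_A^{-1}(\phi_t) f(\phi_t)}_{\C^n} = \abs{f(\phi_t)}_{\C^n}$. Applying the Cauchy--Schwarz inequality in the $t$ variable then gives
\[
\abs{I_A f(x,v)}^2 \le \tau(x,v) \int_0^{\tau(x,v)} \abs{f(\phi_t(x,v))}^2 \, dt.
\]

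Next I would integrate over $\partial_+(SM)$ against $d\mu$ and use the boundedness of $\tau$. Because $(M,g)$ is simple, every geodesic exits in uniformly bounded time, so $T := \norm{\tau}_{L^\infty(\partial_+(SM))} < \infty$. Thus
\[
\norm{I_A f}_{L^2_\mu}^2 \le T \int_{\partial_+(SM)} \int_0^{\tau(x,v)} \abs{f(\phi_t(x,v))}^2 \, dt \, d\mu(x,v).
\]
The inner double integral is precisely the right-hand side of Santal\'o's formula, whose left-hand side equals $\int_{SM} \abs{f}^2 \, d\Sigma^3 = \norm{f}_{L^2(SM)}^2$; here it is essential that the measure $d\mu = \abs{\langle v,\nu\rangle}\,d\Sigma^2$ is exactly the weight appearing in that formula. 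This yields $\norm{I_A f}_{L^2_\mu} \le \sqrt{T}\,\norm{f}_{L^2(SM)}$, and since $C^{\infty}(SM,\C^n)$ is dense in $L^2(SM,\C^n)$ the map $I_A$ extends uniquely to the desired bounded operator.

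The step to check most carefully is the validity and exact normalization of Santal\'o's formula together with the finiteness of $\tau$ on a simple surface; everything else reduces to unitarity of $U_A$ and the Cauchy--Schwarz inequality. I note finally that the very same computation can be organized through the integration-by-parts identity $(X+A)^* = -(X+A)$, which holds because $X$ preserves $d\Sigma^3$ and $A$ is skew-Hermitian; this identifies the adjoint of $I_A$ with the map $w \mapsto w^{\sharp}$ and provides the natural starting point for the surjectivity results in the remainder of this section.
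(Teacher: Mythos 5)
Your proof is correct and follows essentially the same route as the paper: the pointwise bound $\|I_A f(x,v)\|^2_{\C^n} \leq c \int_0^{\tau(x,v)} \|f(\phi_t(x,v))\|^2_{\C^n}\,dt$ that the paper asserts ``is easy to see using compactness of $SM$'' is exactly your Cauchy--Schwarz-plus-unitarity argument with $c = \sup \tau$, after which both proofs integrate against $d\mu$ and conclude by Santal\'o's formula. The closing remark about $(X+A)^* = -(X+A)$ and the adjoint is accurate but extraneous here; the paper develops that point separately later in the same section.
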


\begin{proof} It is easy to see
using compactness of $SM$ and (\ref{eq:xray}) that there exists a constant $c$ independent
of $f$ such that
\[\|I_{A}(f)(x,v)\|^2_{\C^n}\leq c\int_{0}^{\tau(x,v)}\|f(\phi_{t}(x,v))\|^2_{\C^n}\,dt\]
for all $(x,v)\in \partial_{+}(SM)$.
Then
\begin{align*}
\|I_{A}(f)\|_{\mu}^2&=\int_{\partial_{+}(SM)}\|I_{A}(f)(x,v)\|^2_{\C^n}\,d\mu(x,v)\\
&\leq c\int_{\partial_{+}(SM)}\left(\int_{0}^{\tau(x,v)}\|f(\phi_{t}(x,v))\|^2_{\C^n}\,dt\right)\,d\mu(x,v)\\
&=c\int_{SM}\|f\|^2\,d\Sigma^3
\end{align*}
by Santalo's formula (cf. \cite[Appendix A.4]{DPSU}).

\end{proof}

\subsection{Adjoint of $I_{A}$.} Consider the dual
$I_{A}^*:L^2_{\mu}(\partial_{+}(SM),\C^n)\to L^2(SM,\C^n)$ of $I_{A}$. We wish
to find an expression for it. For this consider $h\in L^{2}_{\mu}(\partial_{+}(SM),\C^n)$
and let $h_{\psi}$ be the function defined on $SM$ as follows:
\[h_{\psi}(x,v)=h\circ\alpha\circ\psi,\]
where $\psi(x,v)=\phi_{\tau(x,v)}(x,v)$. In other words $h_{\psi}$ is defined
as being constant on the orbits of the geodesic flow and equal to $h$
on $\partial_{+}(SM)$.
Now we compute using Santalo's formula:
\begin{align*}
(I_{A}(f),h)_{\mu}&=\int_{\partial_{+}(SM)}\left(\int_{0}^{\tau(x,v)}U_{A}^{-1}(\phi_{t}(x,v))f(\phi_{t}(x,v))\,dt,h(x,v)\right)_{\C^n}\,d\mu(x,v)\\
&=\int_{\partial_{+}(SM)}\int_{0}^{\tau(x,v)}( U_{A}^{-1}(\phi_{t}(x,v))f(\phi_{t}(x,v))\,dt,h_{\psi}(\phi_{t}(x,v)))_{\C^n}\,dt\,d\mu(x,v)\\
&=\int_{SM}(U_{A}^{-1}f,h_{\psi})_{\C^n}\,d\Sigma^3\\
&=\int_{SM}( f,(U_{A}^{-1})^{*}h_{\psi})_{\C^n}\,d\Sigma^{3}\\
&=(f,(U_{A}^{-1})^{*}h_{\psi})_{L^{2}}.
\end{align*}
It follows that
\[I_{A}^*(h)=(U_{A}^{-1})^{*}h_{\psi}.\]
Note that if $A$ is unitary this simplifies to
\[I_{A}^*(h)=U_{A}h_{\psi}= h^{\sharp}.\]

\subsection{Adjoints on $H_{m}$} Suppose we now want to consider the attenuated ray transform
acting on special subspaces of $L^{2}(SM,\C^n)$ like $H_{m}$. Let $i_{m}:H_{m}\to L^{2}(SM,\C^n)$ be the
inclusion and 
\[I_{m,A}:=I_{A}\circ i_{m}.\]
Since the adjoint of $i_m$ is the orthogonal projection over $H_{m}$ we deduce that
\[I_{m,A}^*(h)=((U_{A}^{-1})^{*}h_{\psi})_{m}\]
and in the unitary case we have
\begin{equation}\label{eq:adjoint_unitary}
I_{m,A}^*(h)=(h^{\sharp})_{m}
\end{equation}

\begin{Remark} \label{Remark:adjoints}{\rm The space $H_{0}$ may be identified with $L^{2}(M,\C^n)$. The identification
produces a factor of $2\pi$ which accounts for the additional (innocuous) integration
in the fibre direction. Thus strictly speaking $(I_{A}^{0})^*=2\pi I_{0,A}^{*}$. A similar remark applies
to $I_{A}^{1}=I_{1,A}+I_{-1,A}$ and the identification between 1-forms and $\Omega_{-1}\oplus\Omega_{1}$ produces a factor of $\pi$. Hence in the unitary case $(I_{A}^{1})^*|_{SM}=\pi(h^{\sharp}_{-1}+h^{\sharp}_{1})$.}
\end{Remark}

\subsection{The operator $N_{A}=I_{A}^{*}I_{A}$.}
Consider $N_{A}:=I_{A}^{*} I_{A}:L^2(SM,\C^n)\to L^{2}(SM,\C^n)$. Using the expressions derived above for $I_{A}$ and $I_{A}^*$ one obtains:
\begin{align*}
N_{A}(f)(x,v)&=(U_{A}^{-1})^*(x,v)\int_{-\tau(x,-v)}^{\tau(x,v)}
U_{A}^{-1}(\phi_{t}(x,v))f(\phi_{t}(x,v))\,dt.
\end{align*}
Note that if $A$ is unitary this expression simplifies to
\begin{align*}
N_{A}(f)(x,v)&=U_{A}(x,v)\int_{-\tau(x,-v)}^{\tau(x,v)}
U_{A}^{-1}(\phi_{t}(x,v))f(\phi_{t}(x,v))\,dt.
\end{align*}
If we now let $N_{m,A}:=I_{m,A}^{*}I_{m,A}$, then $N_{m,A}=i^{*}_{m}N_{A}i_{m}$ and therefore
\begin{align*}
N_{m,A}(f)(x,v)&=\left((U_{A}^{-1})^*(x,v)\int_{-\tau(x,-v)}^{\tau(x,v)}
U_{A}^{-1}(\phi_{t}(x,v))f(\phi_{t}(x,v))\,dt\right)_{m}.
\end{align*}
In particular
\begin{align*}
N_{0,A}(f)(x)&=\frac{1}{2\pi}\int_{S_{x}}(U_{A}^{-1})^*(x,v)\left(\int_{-\tau(x,-v)}^{\tau(x,v)}
U_{A}^{-1}(\phi_{t}(x,v))f(\pi\circ\phi_{t}(x,v))\,dt\right)dS_{x}.
\end{align*}

The key property for $N_{0,A}$ is:

\begin{Lemma} $N_{0,A}$ is an elliptic classical $\Psi DO$ of order $-1$ in the interior of $M$.
\label{lemma:elliptic}
\end{Lemma}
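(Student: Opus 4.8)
The plan is to show that $N_{0,A}$ is an elliptic classical pseudodifferential operator of order $-1$ by extracting an explicit Schwartz kernel from the integral formula for $N_{0,A}(f)(x)$ derived just above, and then analyzing its singularity structure on the diagonal. The starting point is the formula
\[N_{0,A}(f)(x)=\frac{1}{2\pi}\int_{S_{x}}(U_{A}^{-1})^*(x,v)\left(\int_{-\tau(x,-v)}^{\tau(x,v)}U_{A}^{-1}(\phi_{t}(x,v))f(\pi\circ\phi_{t}(x,v))\,dt\right)dS_{x}.\]
First I would rewrite this as an integral over $M$ by changing variables from $(v,t)\in S_xM\times\re$ to the endpoint $y=\pi\circ\phi_t(x,v)\in M$. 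For a simple surface every $y$ near $x$ is reached by a unique geodesic segment from $x$, so this change of variables is a diffeomorphism away from $y=x$ with Jacobian governed by the geodesic distance; in geodesic polar coordinates $dt\,dv$ becomes $d(x,y)^{-1}$ times the Riemannian volume $dV(y)$ (in dimension two). This produces a kernel of the form $K_A(x,y)=\frac{1}{2\pi}\,\frac{W_A(x,y)}{d(x,y)}$ acting on $f(y)$, where $W_A(x,y)$ is a smooth $n\times n$ matrix built from the parallel transport factors $(U_A^{-1})^*(x,v)U_A^{-1}(\phi_t(x,v))$ evaluated along the connecting geodesic.

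Next I would identify the precise nature of the singularity of $K_A$ at the diagonal. Since $(M,g)$ is simple the geodesic distance $d(x,y)$ is smooth off the diagonal and $d(x,y)^2$ is smooth across it, so $1/d(x,y)$ is a conormal singularity of the expected homogeneity; matched with the smooth matrix amplitude $W_A$, the kernel $K_A(x,y)$ is a classical symbol of the type that gives a $\Psi DO$ of order $-1$ in two dimensions. Concretely, a kernel behaving like $d(x,y)^{-1}$ in two variables has a full symbol homogeneous of degree $-1$ in the cotangent fiber; I would compute the principal symbol by freezing coefficients at $x$, replacing $d(x,y)$ by the Euclidean distance $|x-y|_{g_x}$ in geodesic normal coordinates, and taking the (matrix-valued) Fourier transform of $|z|^{-1}$ in $\re^2$, which is a positive constant times $|\xi|^{-1}$. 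Because $W_A(x,x)=\mathrm{Id}$ (the parallel transport factors collapse on the diagonal, using that $(U_A^{-1})^*=U_A$ in the unitary case), the leading matrix coefficient is the identity, so the principal symbol is a nonzero scalar multiple of $|\xi|^{-1}\,\mathrm{Id}$, which is invertible. This simultaneously gives both that $N_{0,A}$ is order $-1$ and that it is elliptic.

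The main obstacle I anticipate is justifying the microlocal structure cleanly near the diagonal: one must verify that the amplitude $W_A(x,y)$ is genuinely smooth up to $y=x$ (including its behavior as two geodesic directions $v$ and $-v$ both contribute, i.e. that the $t>0$ and $t<0$ parts of the integral combine without introducing extra singularities), and that the change of variables from $S_xM\times\re$ to $M$ does not create boundary effects in the interior estimate. I would handle this by noting that the claim is only about the interior of $M$, so $x$ stays away from $\partial M$ and the full geodesic through $x$ stays in the interior for the relevant range; the smoothness of $U_A$ and $\tau$ on the open manifold, together with the simplicity assumption guaranteeing no conjugate points, then makes $W_A$ smooth and the whole analysis reduces to the standard computation that the Riemann--Liouville–type kernel $d(x,y)^{-1}$ defines an elliptic classical $\Psi DO$ of order $-1$. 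This is the analogue, in the attenuated and matrix-valued setting, of the well-known fact that $I^*I$ for the geodesic X-ray transform on a simple surface is an elliptic $\Psi DO$ of order $-1$ (cf. the references cited in the paper), and the connection $A$ only modifies the smooth amplitude, not the order or the ellipticity.
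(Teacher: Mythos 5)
Your proposal is correct, and in substance it is the same route the paper takes: the paper's own proof is essentially a citation, pointing to Proposition~2 of \cite{FSU} for exactly this Schwartz-kernel analysis in the scalar case $n=1$ (noting it extends to systems), and offering as an alternative the softer argument that $I_{0,A}$ is a Fourier integral operator \cite{G} satisfying the Bolker condition on a simple manifold \cite{GrU}, so that $N_{0,A}=I_{0,A}^*I_{0,A}$ is an elliptic $\Psi DO$ of order $-1$. What you have done is reconstruct the \cite{FSU} computation explicitly rather than cite it; the change of variables, the $d(x,y)^{-1}$ singularity, and the resulting order and ellipticity are all as you say.

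The one step where your justification falls short of a proof is the smoothness of the amplitude $W_A(x,y)$ up to the diagonal, which you yourself flag as the main obstacle. Smoothness of $U_A$ and $\tau$ in the interior and absence of conjugate points give smoothness of the amplitude only in the polar variables $(x,d(x,y),v_{xy})$, and $d(x,y)$ and the direction $v_{xy}$ are themselves singular functions of $(x,y)$ at $y=x$; so your stated reasons do not yield smoothness in $(x,y)$ across the diagonal. Two observations repair this. First, in the unitary case the weight $U_A(x,v_{xy})U_A^{-1}\bigl(\phi_{d(x,y)}(x,v_{xy})\bigr)$ is precisely the inverse of parallel transport with respect to $d_A$ along $\gamma_{xy}$; reparametrizing that geodesic over $[0,1]$ via $s\mapsto\exp_x\bigl(s\exp_x^{-1}(y)\bigr)$ exhibits it as the time-one solution of a linear ODE whose coefficients are smooth in $(x,y)$ across the diagonal, hence it is genuinely smooth there, and the $t<0$ branch contributes the identical term because $A(x,v)$ is linear in $v$. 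Second, even without diagonal smoothness (this is how \cite{FSU} treats general weights) smoothness in polar variables already produces a classical conormal kernel; the price is that the principal symbol is then not the ``frozen'' Fourier transform of $W_A(x,x)/|z|$ but $c|\xi|^{-1}$ times the sum of the diagonal amplitude over the two unit directions orthogonal to $\xi$. In your situation the two computations agree, since that amplitude is a constant positive multiple of $\id$, so ellipticity, classicality, and the order $-1$ statement hold exactly as you claim.
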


\begin{proof} The proof of this claim for $n=1$ is provided in \cite[Proposition 2]{FSU}
for a more general type of ray transforms and the arguments used there can be extended to
this case. Another way to prove the proposition is to use
that $I_{0, A}$ is a Fourier integral operator \cite{G} acting on vector bundles. Since the manifold is simple then the Bolker condition is satisfied
\cite{GrU} and $N_{0,A}$ is an elliptic pseudodifferential system of order $-1$.
A similar argument applies to $N_{m,A}$. We remark that this argument is valid on any open simple manifold containing $M.$ 
\end{proof}

\subsection{Surjectivity results for the adjoint}

We will need the following analogue of \cite[Theorem 1.4]{PU} concerning the surjectivity of $I_{0,A}^*$:

\begin{Theorem} Given $b\in C^{\infty}(M,\C^n)$ there exists $w\in {\mathcal S}^{\infty}(\partial_{+}(SM),\C^n)$ such that
$I_{0,A}^*(w)=b$.\label{thm:surjective}
\end{Theorem}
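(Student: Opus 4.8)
The plan is to recognize $I_{0,A}^*$ as the boundary-to-interior map $w \mapsto (w^\sharp)_0$ (the zeroth Fourier component of the smooth solution to the homogeneous transport equation with boundary value $w$), as established in \eqref{eq:adjoint_unitary}, and then invert this using the ellipticity of $N_{0,A}$. The key idea is to reduce the problem to showing that the range of $I_{0,A}^*$ is both closed and dense in $C^\infty(M,\C^n)$, exploiting the fact that $N_{0,A} = I_{0,A}^* I_{0,A}$ is an elliptic classical pseudodifferential operator of order $-1$ by Lemma \ref{lemma:elliptic}.

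First I would work on an open simple manifold $M_1$ slightly larger than $M$, where the analogous operator $N_{0,A}$ is elliptic (as remarked at the end of Lemma \ref{lemma:elliptic}). The strategy is to show that given $b \in C^\infty(M,\C^n)$, one can first solve $N_{0,A} \tilde{f} = \tilde{b}$ on $M_1$ for a suitable smooth extension $\tilde{b}$ of $b$: using the ellipticity of $N_{0,A}$ on $M_1$ one constructs a parametrix and, after accounting for the finite-dimensional cokernel (which is controlled because $I_{0,A}$ is $s$-injective by Theorem \ref{thm:injective_A}, forcing $N_{0,A}$ to be injective on the relevant space), obtains a smooth solution $\tilde{f}$ on $M_1$. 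Then $b = N_{0,A}\tilde{f}|_M = I_{0,A}^*(I_{0,A}\tilde{f})$, so it remains to identify $w := I_{0,A}(\tilde{f})$, restricted appropriately to $\partial_+(SM)$, as an element of $\mathcal S^\infty(\partial_+(SM),\C^n)$. This last point follows because $I_{0,A}(\tilde f)$ arises from integrating a smooth function along geodesics on the extended manifold, so its associated transport solution extends smoothly across $\partial M$; by Lemma \ref{lemma:ss} this is precisely the condition $Qw \in C^\infty(\partial(SM),\C^n)$ defining $\mathcal S^\infty$.

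The main obstacle will be the Fredholm/surjectivity argument: $N_{0,A}$ being elliptic of negative order gives a good parametrix but does not immediately yield surjectivity onto $C^\infty(M,\C^n)$, since one must rule out a nontrivial cokernel and ensure the solution is globally smooth rather than merely possessing the Sobolev regularity that the parametrix provides. I would handle this by following the scheme of \cite[Theorem 1.4]{PU}: one passes to the larger manifold $M_1$ where the geodesics through $M$ can be extended, uses the injectivity from Theorem \ref{thm:injective_A} to show $N_{0,A}$ is injective on smooth functions supported in $M$, and then applies standard elliptic theory (Fredholm alternative for elliptic $\Psi$DOs on the closed extended manifold, plus elliptic regularity) to invert it on the orthogonal complement of its finite-dimensional kernel. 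The delicate bookkeeping is to verify that the resulting $w$ genuinely lies in $\mathcal S^\infty$ and not merely in a larger space, which is where the smooth extendability of geodesics on the open simple manifold $M_1$ is essential.
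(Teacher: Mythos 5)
Your proposal follows essentially the same route as the paper: the paper's own proof simply invokes Lemma \ref{lemma:elliptic} (ellipticity of $N_{0,A}$) and the injectivity of $I_{0,A}$ on functions coming from Theorem \ref{thm:injective_A}, and then declares the argument identical to that of \cite[Theorem 1.4]{PU} --- precisely the scheme you describe (extension to a slightly larger simple manifold, parametrix/Fredholm argument combined with injectivity, and verification of $\mathcal S^{\infty}$ membership via Lemma \ref{lemma:ss}). The only caveat is that the functional-analytic step is carried out in \cite{PU} on the compact extension $M_1$ (which has boundary, not a closed manifold), but since you defer to that scheme for exactly this delicate point, your outline matches the paper's intended proof.
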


\begin{proof} Given Lemma \ref{lemma:elliptic}, the proof of this theorem
is identical to the proof of Theorem 1.4 in \cite{PU} and is therefore omitted.
The key result needed is injectivity of $I_{0,A}$ on functions which follows from
Theorem \ref{thm:injective_A} in the Introduction: let $f:M\to\C^n$ be a smooth function with $I_{0,A}(f)=0$. 
Then $f=0$.

\end{proof}

We will also need the analogue of \cite[Theorem 4.2]{pestovuhlmann_imrn} and its improvement in \cite[Theorem 2.1]{DU}:

\begin{Theorem} Given a smooth $\C^n$-valued 1-form $\beta$ with $d_{A}\star\beta=0$, there 
 exists $w\in {\mathcal S}^{\infty}(\partial_{+}(SM),\C^n)$ such that
$(I_{A}^{1})^*(w)=\beta$.\label{thm:surjectiveon1forms}
\end{Theorem}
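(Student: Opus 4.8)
The plan is to deduce this surjectivity statement for $(I_A^1)^*$ from the scalar-type surjectivity result for $I_{0,A}^*$ (Theorem \ref{thm:surjective}), following the strategy of \cite[Theorem 4.2]{pestovuhlmann_imrn} and \cite[Theorem 2.1]{DU}, but now in the connection setting. The essential point is that, exactly as in Lemma \ref{lemma:elliptic}, the operator $N_{1,A} = I_{1,A}^* I_{1,A}$ (and more generally the normal operator attached to $I_A^1 = I_{1,A} + I_{-1,A}$) is an elliptic classical $\Psi$DO of order $-1$ on the interior of $M$, and that $I_A^1$ is injective on the relevant space modulo the natural gauge. By Theorem \ref{thm:injective_A}, $I_A^1(\alpha) = 0$ forces $\alpha = d_A p$ with $p|_{\partial M}=0$; the $A$-solenoidal condition $d_A \star \beta = 0$ is precisely the gauge-fixing that removes this kernel, since the $L^2$-orthogonal complement of the potential 1-forms $\{d_A p : p|_{\partial M}=0\}$ consists of 1-forms $\beta$ with $d_A^*\beta = -\star d_A \star \beta = 0$.

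First I would fix a target 1-form $\beta$ with $d_A \star \beta = 0$, which is the condition that $\beta$ be $A$-solenoidal (equivalently $d_A^*\beta = 0$). Using Remark \ref{Remark:adjoints} and the identification of 1-forms with $\Omega_{-1}\oplus\Omega_1$, the condition $(I_A^1)^*(w) = \beta$ is, up to the constant $\pi$, the statement that $(h^\sharp_{-1} + h^\sharp_1)$ equals $\beta$ after restriction to $SM$. The strategy is to realise $\beta$ in the image of the adjoint by solving, via the elliptic theory for $N_{1,A}$, an equation whose solvability is guaranteed by the injectivity of $I_A^1$ on $A$-solenoidal fields. Concretely one writes $\beta = I_A^{1,*} I_A^1 (\text{something}) + \text{error}$ using the parametrix for the elliptic operator $N_{1,A}$, and controls the error with the injectivity result, showing that $\beta$ lies in the closed range of $(I_A^1)^*$; then one upgrades the preimage to an element $w$ whose transport solution is smooth, i.e.\ $w \in \mathcal S^\infty(\partial_+(SM),\C^n)$, using Lemma \ref{lemma:ss} and the regularity of the transport equation.

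The key steps, in order, are: (i) identify the condition $d_A\star\beta = 0$ as exactly the $A$-solenoidal gauge condition, so that $\beta$ is orthogonal to the kernel-generating potential forms and the problem is well-posed; (ii) invoke ellipticity of $N_{1,A}$ together with injectivity of $I_A^1$ modulo gauge (from Theorem \ref{thm:injective_A}) to conclude that the range of $(I_A^1)^*$ restricted to smooth $A$-solenoidal 1-forms is all of that space, by a Fredholm/closed-range argument as in \cite{pestovuhlmann_imrn,DU}; and (iii) produce the preimage $w$ inside $\mathcal S^\infty(\partial_+(SM),\C^n)$ rather than merely in $L^2_\mu$, which is where the smoothness of $w^\sharp$ must be verified through Lemma \ref{lemma:ss} and the boundary regularity of $Qw$.

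The main obstacle I anticipate is step (iii): the surjectivity argument naturally produces a preimage in the $L^2_\mu$ adjoint setting, whereas the theorem demands $w \in \mathcal S^\infty(\partial_+(SM),\C^n)$, i.e.\ a boundary function whose transport solution $w^\sharp$ is genuinely smooth on all of $SM$. Matching the elliptic regularity of the normal operator $N_{1,A}$ in the interior with the boundary behaviour of the scattering data and the Hilbert-transform bracket relation \eqref{eq:bracket} is the delicate part, and it is precisely here that one must lean on the connection analogues of the regularity results of \cite{PU,DU} rather than on formal manipulations. Since Theorem \ref{thm:surjective} already packages the corresponding regularity for the $0$-form case, the cleanest route is to reduce the $1$-form statement to that scalar result: one proves the claim for $\beta$ of the form $d_A$ of a scalar potential (or its $\star$-dual) directly from Theorem \ref{thm:surjective}, and handles the $A$-solenoidal remainder using the elliptic decomposition, thereby inheriting the smoothness of $w$ from the already-established $0$-form surjectivity.
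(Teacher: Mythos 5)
Your proposal follows the route that the paper explicitly declines to take --- the Pestov--Uhlmann/Dairbekov--Uhlmann normal-operator strategy --- and as written it has genuine gaps exactly at the points you flag. Step (ii) requires ellipticity of the normal operator of $I_A^1$ restricted to $A$-solenoidal forms together with a closed-range argument, in the connection setting; neither is established in the paper nor in your sketch. Note that Lemma \ref{lemma:elliptic} covers $N_{0,A}$ (and the single-level operators $N_{m,A}$ on $H_m$), but the full normal operator of $I_A^1=I_{1,A}+I_{-1,A}$ on $1$-forms is \emph{not} elliptic: its symbol degenerates along the gauge directions coming from the kernel $\{d_Ap:\, p|_{\partial M}=0\}$, so your parenthetical claim that it is elliptic is false, and proving ellipticity on the solenoidal complement is precisely the hard technical content you defer to ``connection analogues'' of \cite{PU,DU} that do not exist. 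Step (iii), the upgrade from an $L^2_\mu$ preimage to $w\in\mathcal S^{\infty}(\partial_{+}(SM),\C^n)$, you acknowledge but do not resolve. Finally, your fallback ``cleanest route'' is circular: when the curvature $F_A=d_A\circ d_A$ is nonzero, the form $\star d_A a$ is \emph{not} $A$-solenoidal (indeed $d_A\star(\star d_A a)=-d_Ad_Aa=-F_Aa$), so the decomposition $\beta=d_Ap+\star d_Aa+\eta$ of Lemma \ref{lemma:CR} does not split a solenoidal $\beta$ into tractable pieces --- the three terms are coupled through $d_A^*d_Ap=-\star F_A a$ --- and ``handling the $A$-solenoidal remainder'' is the whole problem, not a remainder.

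The missing idea is that the paper reduces to Theorem \ref{thm:surjective} at the level of Fourier modes on $SM$, not at the level of a Hodge-type decomposition of $\beta$. With $h\in\Omega_1$ nowhere vanishing and $a=-h^{-1}Xh$, the identity $(X+A-ma\,\id)u=h^{-m}((X+A)(h^mu))$ shows that prescribing the $m$-th Fourier mode of a solution of $(X+A)w=0$ is equivalent to prescribing the zeroth mode for the unitary connection $A-ma\,\id$, which is exactly Theorem \ref{thm:surjective}; this is Lemma \ref{lemma:sobre}, and it needs no new ellipticity or Fredholm theory. One then takes \emph{two} odd transport solutions $p,q$ with $p_{-1}=\beta_{-1}$ and $q_1=\beta_1$ and splices them, $w=\sum_{k\leq-1}p_k+\sum_{k\geq1}q_k$. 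The hypothesis $d_A\star\beta=0$ enters precisely here, as the identity $\mu_+(\beta_{-1})+\mu_-(\beta_1)=0$ of (\ref{eq:div}) (via Lemma \ref{lemma:dAf}), which is what makes the spliced function satisfy $(X+A)w=0$ at the zeroth Fourier level; all other levels vanish because $p,q$ are odd transport solutions. This splicing step is what your approach lacks even in its best form: separate surjectivity for the $+1$ and $-1$ modes produces two different boundary functions whose transport solutions contaminate each other's modes, and only the splice removes the contamination. Moreover, the smoothness you worry about in (iii) is then free: since $p_0=q_0=0$ one has $w=\tfrac12(p-i\H p)+\tfrac12(q+i\H q)$, the fibrewise Hilbert transform preserves smoothness, so $w|_{\partial_+(SM)}\in\mathcal S^{\infty}(\partial_{+}(SM),\C^n)$ and, by Remark \ref{Remark:adjoints}, $(I_A^1)^*\bigl(w|_{\partial_+(SM)}\bigr)=\pi\beta$, which gives the theorem after rescaling.
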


\begin{proof} The proof presented here is different from those in \cite[Theorem 4.2]{pestovuhlmann_imrn} and \cite[Theorem 2.1]{DU} and is exclusively based on Theorem \ref{thm:surjective} and the same ideas used in Section \ref{sec:tensor} to describe the range of the ray transform on tensors. As before consider the purely imaginary 1-form
$$
a:=A_{\xi,g}= -h^{-1} Xh.
$$
where $h\in\Omega_1$ is nowhere vanishing. Next observe that if $u:SM\to\C^n$ is any smooth function then
\begin{equation}
(X+A-ma\mbox{\rm Id})u=h^{-m}((X+A)(h^m u))
\label{eq:att-t}
\end{equation}
where $m\in \Z$. First we show the following result which is interesting in its own right:

\begin{Lemma} Given any $f\in\Omega_m$, there exists $w\in C^{\infty}(SM,\C^n)$ such that
\begin{enumerate}
\item $(X+A)w=0$,
\item $w_m=f$.
\end{enumerate}
\label{lemma:sobre}
\end{Lemma}

\begin{proof} Since $A-ma\mbox{\rm Id}$ is a unitary connection we may apply Theorem \ref{thm:surjective} to deduce that there is $u\in C^{\infty}(SM,\C^n)$ such that
$(X+A-ma\mbox{\rm Id})u=0$ and $u_0=h^{-m}f$. If we let $w:=h^{m}u$, then clearly
$w_m=f$ and by (\ref{eq:att-t}) we also have $(X+A)w=0$.
\end{proof}

Introduce the operators $\mu_{\pm}=\eta_{\pm}+A_{\pm 1}$. Clearly $X+A=\mu_{+}+\mu_{-}$
and it is easy to check that $d_{A}\star \beta=0$ if and only if 
\begin{equation}
\mu_{+}(\beta_{-1})+\mu_{-}(\beta_{1})=0
\label{eq:div}
\end{equation}
where $\beta=\beta_{-1}+\beta_{1}$ (this follows right away from Lemma \ref{lemma:dAf} below).

By Lemma \ref{lemma:sobre} we may find odd functions $p,q\in C^{\infty}(SM,\C^n)$ solving the transport equation $(X+A)p=(X+A)q=0$ and with $p_{-1}=\beta_{-1}$ and $q_{1}=\beta_{1}$.
Consider now the smooth function
\[w:=\sum_{-\infty}^{-1}p_k+\sum_{1}^{\infty}q_k.\]
Clearly $w_{-1}+w_{1}=\beta_{-1}+\beta_{1}=\beta$ and thanks to (\ref{eq:div}) we also
have $(X+A)w=0$. The existence of such a solution of the transport equation is certainly equivalent to the statement of Theorem \ref{thm:surjectiveon1forms} and hence its proof is completed.

\end{proof}

\section{Proof of Theorem \ref{thm:range_unitary}}\label{sec:proofmain}

The first step is to show the following lemma:

\begin{Lemma} \label{lemma:CR}  Let $(M,g)$ be a Riemannian disk and let $A$ be a unitary connection.
\begin {enumerate}
\item Let $\alpha$ be a smooth $\C^n$-valued 1-form. Then there are functions $a,p\in C^{\infty}(M,\C^n)$ and $\eta\in\mathfrak{H}_{A}$
such that $p|_{\partial M}=0$ and $d_{A}p+\star d_{A}a+\eta=\alpha$.

\item Given $b\in C^{\infty}(M,\C^n)$ there is a smooth $\C^n$-valued 1-form $\beta$ with $\star d_{A}\beta=b$
and $d_{A}\star\beta=0$.
\end{enumerate}

\end{Lemma}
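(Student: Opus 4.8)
The plan is to treat the two items as twisted analogues of the classical Hodge/de Rham decomposition on a disk, reducing everything to elliptic boundary value problems for the connection Laplacian $\Delta_A = d_A^* d_A$ (acting on functions) and its companion on forms. Throughout I will use that $M$ is a disk, so the \emph{ordinary} de Rham cohomology is trivial; the only obstruction to a clean decomposition comes from the twisting by $A$, and this obstruction is precisely measured by the finite-dimensional space $\mathfrak{H}_A$ of $A$-harmonic forms.

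For item (1), I would first solve an interior Dirichlet-type problem to extract the exact part. Given the smooth 1-form $\alpha$, consider the connection Laplacian on functions with Dirichlet data: solve $\Delta_A p = d_A^* \alpha$ in $M$ with $p|_{\partial M} = 0$. Since $\Delta_A$ is elliptic and the Dirichlet problem is well-posed (the operator is positive and the boundary condition regular), this has a smooth solution $p$. Setting $\alpha' := \alpha - d_A p$, we have arranged $d_A^* \alpha' = d_A^* \alpha - \Delta_A p = 0$, so $\alpha'$ is coclosed. The task then reduces to writing a coclosed 1-form as $\star d_A a + \eta$ with $\eta$ being $A$-harmonic. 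Writing $\star \alpha'$, which is again a 1-form, the equation $\star d_A a = \alpha' - \eta$ becomes (applying $\star$) a standard inhomogeneous problem $d_A a = \pm(\star\alpha' - \star\eta)$; since the right-hand side must be $d_A$-exact, I project $\star \alpha'$ onto the orthogonal complement of the (co)closed $A$-harmonic forms, defining $\eta$ to be exactly the $\mathfrak{H}_A$-component needed to make the residual lie in the range of $d_A$. Solvability of $d_A a = (\text{residual})$ follows because on the disk the only cohomological obstruction to this twisted problem is captured by $\mathfrak{H}_A$, which we have just subtracted off. One then checks $\eta \in \mathfrak{H}_A$ using that it was defined as a projection onto that space, and that $p|_{\partial M}=0$ holds by construction.

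For item (2), the goal is to invert $\star d_A$ on 1-forms subject to the auxiliary constraint $d_A^* \beta = 0$ (note $d_A^* = -\star d_A \star$, so $d_A^*\beta=0$ is the same as $\star\beta$ being $d_A$-closed after applying $\star$). The natural strategy is to look for $\beta = \star d_A f + (\text{correction})$ for a scalar $f$ and reduce the two conditions $\star d_A \beta = b$ and $d_A^* \beta = 0$ to a single scalar elliptic equation. If I set $\beta = \star d_A f$, then $d_A^* \beta = -\star d_A \star \star d_A f = \pm \star d_A d_A f$, which vanishes only when the curvature term $F_A f$ does not obstruct it; so more carefully I would posit $\beta$ as a coclosed form and compute $\star d_A \beta$ in terms of the connection Laplacian, arriving at an equation of the form $\Delta_A f = \pm b$ (up to curvature contributions) to be solved with an appropriate boundary condition, again using ellipticity and well-posedness on the disk. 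The homogeneous constraint $d_A^* \beta = 0$ is then either automatic from the ansatz or is arranged by adding an $A$-harmonic correction as in item (1).

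The main obstacle I anticipate is the bookkeeping of the $A$-harmonic space $\mathfrak{H}_A$ in item (1): unlike the untwisted case (where $M$ being a disk forces $\mathfrak{H}_A = 0$), here the decomposition $\alpha = d_A p + \star d_A a + \eta$ genuinely requires the nontrivial $\eta$ term, and I must verify both that the projection onto $\mathfrak{H}_A$ is the right compatibility correction and that the resulting $a$ is smooth up to the boundary. Establishing this cleanly amounts to proving a twisted Hodge decomposition with the stated boundary conditions, which is where the elliptic-systems-with-regular-boundary-condition machinery (already invoked after the definition of $\mathfrak{H}_A$ via \cite[Section 5.11]{Ta}) does the heavy lifting. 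The rest is routine once the functional-analytic framework for $\Delta_A$ with Dirichlet conditions is in place.
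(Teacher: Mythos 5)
Your plan has a genuine gap, and it is the same gap in both items: you treat $d_A$ as if it were $d$, but $d_A^2 = F_A \neq 0$, and the curvature destroys exactly the orthogonality/``cohomological'' structure that your decoupled scheme relies on. Concretely, in item (1) you first fix $p$ by solving $\Delta_A p = d_A^*\alpha$, $p|_{\partial M}=0$, and then try to solve $\star d_A a = \alpha' - \eta$ with $\alpha' = \alpha - d_A p$ coclosed. Apply $d_A^*$ to this equation: since $d_A^* = -\star d_A \star$ and $\star\star = -\mathrm{Id}$ on $1$-forms, one gets $d_A^*(\star d_A a) = \star d_A d_A a = \star(F_A a)$, while the right-hand side is coclosed. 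So any solution must satisfy $F_A a = 0$ pointwise; if the curvature is nowhere vanishing (e.g.\ $n=1$ with $dA \neq 0$, which is precisely the situation in the tensor tomography application) this forces $a \equiv 0$, hence $\alpha - d_A p \in \mathfrak{H}_A$, which fails for generic $\alpha$ because $\mathfrak{H}_A$ is finite dimensional. The root cause is that the ranges of $p \mapsto d_A p$ (with $p|_{\partial M}=0$) and $a \mapsto \star d_A a$ are \emph{not} $L^2$-orthogonal: $(d_A p, \star d_A a)_{L^2} = (p, \star (F_A a))_{L^2}$. Only the \emph{sum} of these two ranges complements $\mathfrak{H}_A$, and the correct $p$ is not the solution of your Dirichlet problem (it satisfies $\Delta_A p = d_A^*\alpha - \star(F_A a)$ with $a$ unknown), so $p$ and $a$ cannot be found one after the other. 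For the same reason, your assertion that ``on the disk the only cohomological obstruction to this twisted problem is captured by $\mathfrak{H}_A$'' has no content: since $d_A$ is not a differential, there is no twisted de Rham cohomology, and solvability of the overdetermined equation $d_A a = \gamma$ is not governed by orthogonality to any finite-dimensional space.

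The paper avoids this by never decoupling. For item (1) it solves the \emph{first-order elliptic system} $(d_A + d_A^*)u = \alpha - \eta$ for a graded even-degree form $u = u^0 + u^2$ with relative boundary condition $tu=0$ (Lemma \ref{lemma:hodgediracconnection}): this boundary value problem is Fredholm with smooth kernel, the $1$-form part of the kernel is exactly $\mathfrak{H}_A$, and the range is the orthocomplement of the kernel; then $p = u^0$, $a = -\star u^2$ solve the problem simultaneously, curvature coupling included. Item (2) in your version has the identical defect: with $\beta = \star d_A f$ one gets $\star d_A \beta = -\Delta_A f$ but $d_A^*\beta = \star(F_A f) \neq 0$, and your proposed repair of ``adding an $A$-harmonic correction'' changes nothing, since elements of $\mathfrak{H}_A$ are annihilated by both $d_A$ and $d_A^*$; positing ``$\beta$ coclosed'' instead just reproduces the same overdetermined problem you cannot solve. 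The paper's fix is the two-term ansatz $\beta = d_A u^0 + d_A^* u^2$ (Lemma \ref{lemma:CR2}), which turns the pair of constraints into a coupled system $-\Delta_A u + Qu = f$ with $Q$ of order zero built from the curvature; and because this system is no longer positive or self-adjoint, it is solved with \emph{no} boundary condition at all via a duality-plus-Carleman-estimate semiglobal solvability result (Lemma \ref{lemma:deltaA_surjective}), rather than by a Dirichlet problem on the disk. Both halves of your plan are missing this coupled-system idea, and without it the scheme breaks down whenever $F_A \neq 0$.
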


We defer the proof of this lemma to the end of the section. Assuming Lemma \ref{lemma:CR}, the next step in the proof of Theorem  \ref{thm:range_unitary} is to apply the bracket formula (\ref{eq:bracket}) to any smooth solution
$w^{\sharp}$ of the transport equation $(X+A)w^{\sharp}=0$ to obtain
\[-(X+A)\H w^{\sharp}=(X_{\perp}+\star A)(w^{\sharp}_{0})+\{(X_{\perp}+\star A)(w^{\sharp})\}_{0}.\]
If $f\in\Omega_{0}$, then $X_{\perp}f=\star df$ and therefore 
$$
(X_{\perp}+\star A)(w^{\sharp}_{0}) = \star d_A(w^{\sharp}_{0}).
$$
For the other term, since $X_{\perp}=i(\eta_{-}-\eta_{+})$ and $\star(A_1+A_{-1}) = i(A_{-1}-A_1)$ we have
\begin{align*}\{(X_{\perp}+\star A)(w^{\sharp})\}_{0}&=i(\eta_{-}(w^{\sharp}_{1})-\eta_{+}(w^{\sharp}_{-1}))+i(A_{-1}w^{\sharp}_{1}-A_{1}w^{\sharp}_{-1})\\
&=i(\mu_{-}(w^{\sharp}_{1})-\mu_{+}(w^{\sharp}_{-1})),
\end{align*}
where $\mu_{\pm}:=\eta_{\pm}+A_{\pm 1}$. The last expression may be simplified as follows:

\begin{Lemma} For any $\C^n$-valued 1-form $\alpha = \alpha_1 + \alpha_{-1}$ we have
\[\star d_{A}\alpha=2i(\mu_{-}(\alpha_{1})-\mu_{+}(\alpha_{-1})).\]
\label{lemma:dAf}
\end{Lemma}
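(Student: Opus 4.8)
The plan is to split $\star d_{A}\alpha = \star d\alpha + \star(A\wedge\alpha)$ and handle the two pieces separately: the first term is the unattenuated identity, the second is a purely algebraic contribution of the connection.

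First I would record the elementary consequence of $X_{\perp}=i(\eta_{-}-\eta_{+})$. Applying it to $\alpha=\alpha_{1}+\alpha_{-1}$ and extracting the zeroth Fourier mode gives $(X_{\perp}\alpha)_{0}=i(\eta_{-}(\alpha_{1})-\eta_{+}(\alpha_{-1}))$, since $\eta_{-}\alpha_{1},\eta_{+}\alpha_{-1}\in\Omega_{0}$ while $\eta_{+}\alpha_{1}\in\Omega_{2}$ and $\eta_{-}\alpha_{-1}\in\Omega_{-2}$ drop out. Thus the unattenuated part of the lemma is equivalent to the geometric identity
\[\star d\alpha = 2(X_{\perp}\alpha)_{0},\]
which is the 1-form analogue of the relation $X_{\perp}f=\star df$ valid for $f\in\Omega_{0}$. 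I would prove this by a direct computation in a local orthonormal coframe $\{e^{1},e^{2}\}$ (equivalently in isothermal coordinates), writing $d\alpha(e_{1},e_{2})=e_{1}(\alpha(e_{2}))-e_{2}(\alpha(e_{1}))-\alpha([e_{1},e_{2}])$ and comparing with the fibrewise average of $X_{\perp}\alpha$ over $S_{x}M$; the identification of $e_{1},e_{2}$ with the directions $v,v^{\perp}$ and the structure equations relating $X,X_{\perp},V$ produce exactly the factor $2$. The same identity can alternatively be read off from the $\partial/\bar\partial$-description $d\alpha=\bar\partial\alpha_{1}+\partial\alpha_{-1}$ on the surface. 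This is the technical heart of the lemma and the step I expect to require the most care with signs and normalizations.

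For the connection term I would pass to the complex structure: under $\Omega_{1}\cong\Gamma(\kappa)$ and $\Omega_{-1}\cong\Gamma(\bar\kappa)$, the components $\alpha_{1},A_{1}$ are $(1,0)$-forms and $\alpha_{-1},A_{-1}$ are $(0,1)$-forms. Writing $\omega=e^{1}+ie^{2}$ for the $(1,0)$-form, whose restriction to $SM$ is a nowhere-vanishing element of $\Omega_{1}$, bilinearity of the wedge gives $A_{1}\wedge\alpha_{1}=A_{-1}\wedge\alpha_{-1}=0$ (two multiples of $\omega$, respectively of $\bar\omega$, wedge to zero), so only the mixed terms survive. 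Using $\omega\wedge\bar\omega=-2i\,e^{1}\wedge e^{2}$ and the fact that restriction to $SM$ identifies the wedge coefficients with the pointwise products $A_{\pm1}\alpha_{\mp1}\in\Omega_{0}$, I obtain $\star(A\wedge\alpha)=2i(A_{-1}\alpha_{1}-A_{1}\alpha_{-1})$, with $A_{\pm1}$ acting by matrix multiplication.

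Combining the two computations then yields
\begin{align*}
\star d_{A}\alpha &= 2i(\eta_{-}(\alpha_{1})-\eta_{+}(\alpha_{-1}))+2i(A_{-1}\alpha_{1}-A_{1}\alpha_{-1})\\
&= 2i(\mu_{-}(\alpha_{1})-\mu_{+}(\alpha_{-1})),
\end{align*}
since $\mu_{\pm}=\eta_{\pm}+A_{\pm1}$, which is the asserted formula. The only genuine obstacle is the unattenuated identity $\star d\alpha=2(X_{\perp}\alpha)_{0}$; once it is in hand, everything else is bilinear bookkeeping in the Fourier (equivalently complex) decomposition.
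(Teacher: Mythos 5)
Your proposal is correct: both of your intermediate identities hold with the paper's conventions --- $\star d\alpha=2(X_{\perp}\alpha)_{0}$ and $\star(A\wedge\alpha)=2i(A_{-1}\alpha_{1}-A_{1}\alpha_{-1})$ --- and they sum to the stated formula since $\mu_{\pm}=\eta_{\pm}+A_{\pm 1}$. The paper's proof is also a direct computation, but it is organized differently: rather than projecting onto the zeroth Fourier mode, it evaluates the $2$-forms $d\alpha$ and $A\wedge\alpha$ on the oriented orthonormal frame $(v,iv)$ to obtain the \emph{exact} pointwise identity on $SM$,
\begin{equation*}
\star d_{A}\alpha=(X_{\perp}+\star A)(\alpha)-(X+A)(\star\alpha),
\end{equation*}
and then substitutes $\alpha_{\pm 1}=(\alpha\pm i\star\alpha)/2$ and $A_{\pm 1}=(A\pm i\star A)/2$. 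In that route the derivative and connection terms are treated uniformly by a single frame computation, and the $\Omega_{\pm 2}$ components cancel identically rather than being discarded by a projection, so no separate $\omega\wedge\bar{\omega}$ algebra is needed. Your route buys a clean separation of the one analytic ingredient (your identity $\star d\alpha=2(X_{\perp}\alpha)_{0}$, which is precisely the zeroth-mode projection of the displayed identity above) from purely algebraic bookkeeping, and your complex-structure computation of $\star(A\wedge\alpha)$ is correct and transparent. The cost is that this analytic ingredient still requires essentially the same local frame computation that the paper performs, and it is the one step your proposal only sketches; the sketch does close (the identity is true, and the coordinate check in isothermal coordinates is routine), so there is no genuine gap, but in a final write-up that computation should be carried out, at which point the total effort of the two proofs is comparable.
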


\begin{proof} This is just a calculation. Let $v \in T_x M$ be a unit vector and let $iv \in T_x M$ be the unique unit vector such that $\{v, iv\}$ is an oriented orthonormal basis of $T_x M$. First we see that
\[d\alpha_{x}(v,iv)=X_{\perp}(\alpha)-X(\star\alpha)\]
and
\[\star (A\wedge\alpha)_{x}=(A\wedge\alpha)_{x}(v,iv)=-A_{x}(v)\star\alpha_{x}(v)+\star A_{x}(v)\alpha_{x}(v).\]
Hence
\[\star d_{A}\alpha=(X_{\perp}+\star A)(\alpha)-(X+A)(\star\alpha).\]
Now observe that $\alpha_{\pm 1}=(\alpha\pm i\star \alpha)/2$ and similarly for $A$.
Finally, using the definition of $\mu_{\pm}$ one checks the identity of the lemma.
\end{proof}

Putting everything together and using the formulas for the adjoint of the ray transform in Remark \ref{Remark:adjoints}, we have
\begin{align*}
-2\pi(X+A)\H w^{\sharp} &= 2\pi\star d_A(w^{\sharp}_{0}) + \pi \star d_A(w^{\sharp}_1+w^{\sharp}_{-1}) \\
 &= \star d_{A}(I^{0}_{A})^*(w)+\star d_{A}(I^{1}_{A})^*(w).
\end{align*}
Splitting into even and odd parts, we obtain 
\begin{align*}
-2\pi(X+A)\H_- w^{\sharp} &= \star d_{A}(I^{1}_{A})^*(w), \\
-2\pi(X+A)\H_+ w^{\sharp} &= \star d_{A}(I^{0}_{A})^*(w).
\end{align*}
If we now apply $I_{A}$ to both sides of these identities and recall the definitions of $P_{\pm}$ and $Q$, together with the fact that $I_A((X+A)g) = B (g|_{\partial(SM)})$, we have proved the following result.

\begin{Proposition} \label{prop:factorization} The following equalities hold:
\[-2\pi P_{-}=I_{A}^{0}\star d_{A}(I^{1}_{A})^*,\]
\[-2\pi P_{+}=I_{A}^{1}\star d_{A}(I^{0}_{A})^*.\]
\end{Proposition}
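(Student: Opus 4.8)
The plan is to read both identities off the bracket relation (\ref{eq:bracket}), applied to a smooth solution of the \emph{homogeneous} transport equation, and then to apply $I_{A}$ and recognize the boundary operators $P_{\pm}$. First I would fix $w\in\mathcal S^{\infty}(\partial_{+}(SM),\C^n)$ and let $w^{\sharp}$ be the smooth solution of $(X+A)w^{\sharp}=0$ with $w^{\sharp}|_{\partial_{+}(SM)}=w$. Since the right-hand side of the transport equation vanishes, (\ref{eq:bracket}) collapses to
\[-(X+A)\H w^{\sharp}=(\X+\star A)(w^{\sharp}_{0})+\{(\X+\star A)(w^{\sharp})\}_{0}.\]

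The next task is to recognize the right-hand side as a Hodge-starred covariant derivative. For the zeroth term I would use $\X f=\star df$ on $\Omega_{0}$ to write $(\X+\star A)(w^{\sharp}_{0})=\star d_{A}(w^{\sharp}_{0})$; for the remaining term I would expand $\X=i(\eta_{-}-\eta_{+})$ and $\star(A_{1}+A_{-1})=i(A_{-1}-A_{1})$ to obtain $\{(\X+\star A)(w^{\sharp})\}_{0}=i(\mu_{-}(w^{\sharp}_{1})-\mu_{+}(w^{\sharp}_{-1}))$, and then invoke Lemma \ref{lemma:dAf} to identify this as $\tfrac12\star d_{A}(w^{\sharp}_{1}+w^{\sharp}_{-1})$. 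Multiplying by $2\pi$ and using the adjoint formulas of Remark \ref{Remark:adjoints}, namely $2\pi\,w^{\sharp}_{0}=(I_{A}^{0})^{*}(w)$ and $\pi(w^{\sharp}_{1}+w^{\sharp}_{-1})=(I_{A}^{1})^{*}(w)$, this turns into
\[-2\pi(X+A)\H w^{\sharp}=\star d_{A}(I^{0}_{A})^*(w)+\star d_{A}(I^{1}_{A})^*(w).\]

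The key observation is then a parity count: $\H$ acts fibrewise and hence preserves the even/odd (in $v$) decomposition, while $X+A$ interchanges the even and odd components. Since $\star d_{A}(I^{0}_{A})^*(w)$ is a $1$-form (odd in $v$) and $\star d_{A}(I^{1}_{A})^*(w)$ is a function (even in $v$), separating the two parities splits the displayed identity into
\[-2\pi(X+A)\H_{-} w^{\sharp} = \star d_{A}(I^{1}_{A})^*(w),\qquad -2\pi(X+A)\H_{+} w^{\sharp} = \star d_{A}(I^{0}_{A})^*(w).\]

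Finally I would apply $I_{A}$ to both identities. On the left, $\H_{\pm}w^{\sharp}$ is smooth because the fibrewise Hilbert transform preserves $C^{\infty}(SM,\C^n)$, so (\ref{eq:ftc2}) applies in the form $I_{A}((X+A)g)=B(g|_{\partial(SM)})$ with $g=\H_{\pm}w^{\sharp}$; since $\H$ commutes with restriction to $\partial(SM)$ and $w^{\sharp}|_{\partial(SM)}=Qw$, we get $(\H_{\pm}w^{\sharp})|_{\partial(SM)}=\H_{\pm}(Qw)$, so the left-hand sides become $-2\pi B\H_{\pm}Qw=-2\pi P_{\pm}w$. On the right, $\star d_{A}(I^{1}_{A})^*(w)$ is a function and $\star d_{A}(I^{0}_{A})^*(w)$ is a $1$-form, so $I_{A}$ acts as $I_{A}^{0}$ and $I_{A}^{1}$ respectively, yielding exactly $-2\pi P_{-}=I^{0}_{A}\star d_{A}(I^{1}_{A})^*$ and $-2\pi P_{+}=I^{1}_{A}\star d_{A}(I^{0}_{A})^*$. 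I expect no deep obstacle: the argument is a chain of identifications, and the points demanding care are the bookkeeping of the $\pi$ and $2\pi$ factors from Remark \ref{Remark:adjoints}, the verification that $\H_{\pm}w^{\sharp}$ is smooth so that (\ref{eq:ftc2}) is legitimate, and the parity matching in the splitting step, which is precisely what forces the pairing of $P_{-}$ with $(I^{1}_{A})^*$ and of $P_{+}$ with $(I^{0}_{A})^*$.
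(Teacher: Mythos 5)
Your proof is correct and takes essentially the same approach as the paper's: applying the bracket relation (\ref{eq:bracket}) to $w^{\sharp}$, identifying the right-hand side as $\star d_{A}$ terms via Lemma \ref{lemma:dAf} and the adjoint formulas of Remark \ref{Remark:adjoints}, splitting by parity, and then applying $I_{A}$ through the identity $I_{A}((X+A)g)=B(g|_{\partial(SM)})$. The points you flag for care---smoothness of $\H_{\pm}w^{\sharp}$ and the fact that the fibrewise Hilbert transform commutes with restriction to $\partial(SM)$, so that $(\H_{\pm}w^{\sharp})|_{\partial(SM)}=\H_{\pm}Qw$---are exactly the steps the paper leaves implicit, and your handling of them is right.
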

From these factorization identities the proof of Theorem \ref{thm:range_unitary} easily follows
when combined with Lemma \ref{lemma:CR} and the surjectivity results in Section \ref{sec:adj}.

\begin{proof}[Proof of Theorem \ref{thm:range_unitary}]
Let us prove the first claim. Suppose that $u=P_{-}w$, then the first identity in Proposition \ref{prop:factorization} shows that $u$ belongs to the range of $I^{0}_{A}$. Conversely, if $u$ belongs to the range of $I_{A}^{0}$, then $u=I_{A}^{0}(b)$ for some $b\in C^{\infty}(M,\C^n)$.
By item (2) in Lemma  \ref{lemma:CR} we can find a smooth 1-form $\beta$ such that $\star d_{A}\beta=b$
and $d_{A}\star\beta=0$. By Theorem \ref{thm:surjectiveon1forms} we can find $w\in {\mathcal S}^{\infty}(\partial_{+}(SM),\C^n)$ such that $(I_{A}^{1})^*(w)=\beta$. Using again the first identity in Proposition \ref{prop:factorization} we see that $u=-2\pi P_{-}w$ as desired.

The proof of the second claim in Theorem \ref{thm:range_unitary} is similar.  Suppose that
$u=P_{+}w+I_{A}^{1}(\eta)$ with $\eta\in\mathfrak{H}_{A}$. Then the second identity in Proposition \ref{prop:factorization} shows that $u$ belongs to the range of $I^{1}_{A}$. 
Conversely,  if $u$ belongs to the range of $I_{A}^{1}$, then $u=I_{A}^{1}(\alpha)$ for some
$\alpha\in\Lambda^{1}(M)$. By  item (1) in Lemma  \ref{lemma:CR} we can find smooth 
functions $a$ and $p$ in $M$ and $\eta\in\mathfrak{H}_{A}$ such that
$\alpha=d_{A}p+\star d_{A}a+\eta$ with $p|_{\partial M}=0$.  By Theorem \ref{thm:surjective}
there is $w\in {\mathcal S}^{\infty}(\partial_{+}(SM),\C^n)$ such that $(I_{A}^{0})^*(w)=a$.
Since $I_{A}^{1}(d_{A}p)=0$ we see that $I_{A}^{1}(\alpha)=I_{A}^{1}(\eta)+I_{A}^{1}(\star d_{A}a)$.
Using the second identity in Proposition \ref{prop:factorization} we conclude that
$u=I_{A}^{1}(\alpha)=I_{A}^{1}(\eta)-2\pi P_{+}w$ and Theorem \ref{thm:range_unitary} is proved.
\end{proof}

It remains to prove Lemma \ref{lemma:CR}. The first part of the lemma is closely related to a Hodge decomposition. Consider the operator $d_A + d_A^*$ acting on sections of the bundle $(\Lambda M)^n$ of $\C^n$-valued differential forms. We will use Sobolev spaces with relative boundary condition corresponding to this first order operator, 
$$
H^s_R(M, (\Lambda M)^n) = \{ u \in H^s(M, (\Lambda M)^n) \,;\, tu = 0 \}
$$
where $s \geq 1$ and $t = j^*$ is the tangential trace with $j: \partial M \to M$ the natural inclusion.

\begin{Lemma} \label{lemma:hodgediracconnection}
If $(M,g)$ is a compact oriented manifold with boundary and if $s \geq 1$, the operator 
$$
d_A + d_A^*: H^s_R(M, (\Lambda M)^n) \to H^{s-1}(M,(\Lambda M)^n)
$$
has a finite dimensional kernel (independent of $s$) contained in $C^{\infty}(M,(\Lambda M)^n)$. The range of this operator is the intersection of $H^{s-1}(M,(\Lambda M)^n)$ and the $L^2$-ortho\-comple\-ment of its kernel.
\end{Lemma}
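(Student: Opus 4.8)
The plan is to treat $D_A := d_A + d_A^*$ as a first order elliptic operator on the bundle $(\Lambda M)^n$ of $\C^n$-valued forms, subject to the relative boundary condition $tu = 0$, and to reduce every assertion to the classical Hodge--de Rham theory with relative boundary conditions (cf. \cite[Section 5.11]{Ta}). The key starting observation is that $d_A \omega = d\omega + A \wedge \omega$ and $d_A^* = -\star d_A \star$ differ from the uncoupled operators $d$ and $d^*$ only through the zeroth order, purely algebraic term coming from $A$. Hence $D_A$ has the same principal symbol as $d + d^*$, so it is elliptic, and the relative condition $tu = j^*u = 0$ is a regular (elliptic) boundary condition for $D_A$ exactly as it is for $d + d^*$.

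Next I would record the relevant Green's formula. Integration by parts for $d_A$ produces a boundary term depending only on the tangential trace $tu$; the algebraic piece $A \wedge \cdot$, being its own pointwise transpose up to sign, contributes nothing on $\partial M$, so the boundary bilinear form appearing in $(D_A u, v)_{L^2} - (u, D_A v)_{L^2}$ is identical to the one for $d + d^*$. Using the principal symbol $\sigma(D_A)(\xi) = \xi \wedge \cdot - \iota_{\xi^{\sharp}}$ (Clifford multiplication), one checks that when $tu = 0$ on $\partial M$ the form $u$ is purely normal, say $u = \nu \wedge w$, and that Clifford multiplication by the conormal $\nu$ sends such a form to a purely tangential one. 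Consequently $\langle \sigma(D_A)(\nu) u, v\rangle = 0$ whenever $tu = tv = 0$, so the boundary form vanishes on $H^s_R$. Thus $D_A$ is symmetric on this domain, and a dimension count shows the relative condition is Lagrangian; by standard theory \cite{Ta} the $L^2$-realization of $D_A$ with domain $\{u \in H^1 : tu = 0\}$ is self-adjoint.

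With these two facts in hand I would invoke the theory of elliptic boundary value problems. As a bounded map $H^s_R \to H^{s-1}$ the operator $D_A$ is Fredholm, so its kernel is finite dimensional; elliptic regularity up to the boundary shows every element of $\ker D_A$ is smooth and lies in $H^s_R$ for all $s \geq 1$, whence the kernel is independent of $s$ and contained in $C^{\infty}$. For the range I use self-adjointness of the $L^2$-realization: its range is closed (by Fredholmness) and equals $(\ker D_A)^{\perp}$ in $L^2$. To identify the range of $D_A : H^s_R \to H^{s-1}$, take $f \in H^{s-1}$ with $f \perp \ker D_A$; then $f \in \mathrm{Ran}(D_A)$, so $D_A u = f$ for some $u \in H^1_R$, and elliptic regularity for this regular boundary value problem upgrades $u$ to $H^s_R$. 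Hence the range is exactly $H^{s-1} \cap (\ker D_A)^{\perp}$, as claimed.

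The only step requiring genuine care --- and the one I expect to be the main obstacle --- is verifying that the relative boundary condition remains a self-adjoint elliptic boundary condition after coupling to the connection $A$. As indicated above, this reduces to the observation that $A$ enters only through zeroth order algebraic terms, so it disturbs neither the principal symbol (hence ellipticity and the regularity of the boundary condition) nor the boundary contribution in Green's formula (hence symmetry). Once this reduction to the classical $A = 0$ case is in place, the finiteness and smoothness of the kernel and the orthocomplement description of the range are routine consequences of Fredholm theory and elliptic regularity.
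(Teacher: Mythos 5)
Your proof is correct and follows essentially the same route as the paper's: reduce ellipticity of the relative boundary condition to the $A=0$ Hodge--Dirac case via the principal symbol, obtain a finite dimensional smooth kernel from Fredholm theory and elliptic regularity up to the boundary, and identify the range as $H^{s-1}\cap K^{\perp}$ using symmetry of $d_A+d_A^*$ on $H^1_R$ (Green's formula with vanishing boundary term) together with closed range and elliptic regularity. The only cosmetic difference is that you package the symmetry step as self-adjointness of the $L^2$-realization, verified via the Clifford multiplication computation, whereas the paper runs the equivalent duality argument $((d_A+d_A^*)u,v)_{L^2}=(u,(d_A+d_A^*)v)_{L^2}$ for $u,v\in H^1_R$ directly.
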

\begin{proof}
We first note that the boundary value problem 
$$
(d_A + d_A^*) u = f, \quad tu = g
$$
is elliptic. In fact, the ellipticity only depends on the principal symbols of the operator $d_A + d_A^*$ and the operator in the boundary condition \cite[Section 20.1]{Hor}. This reduces the statement to the ellipticity of the boundary value problem for the Hodge Dirac operator $d + d^*$ with relative boundary conditions, which is well known \cite[Exercise 5.11.9]{Ta}. It now follows from \cite[Theorem 20.1.2]{Hor} that for any $s \geq 1$ the map 
\begin{multline*}
H^s(M, (\Lambda M)^n) \to H^{s-1}(M,(\Lambda M)^n) \oplus H^{s-1/2}(\partial M, (\Lambda (\partial M))^n),  \\ u \mapsto ((d_A+d_A^*) u, tu)
\end{multline*}
is Fredholm and hence has finite dimensional kernel denoted by $K$. By \cite[Theorem 20.1.8]{Hor} the elements in $K$ are of class $C^{\infty}$.

If $u \in H^1_R(M, (\Lambda M)^n)$, we have 
\begin{align*}
(d_A + d_A^*)u = 0 &\Longleftrightarrow ((d_A + d_A^*)u,v)_{L^2} = 0 \text{ for all } v \in H^1_R(M, (\Lambda M)^n) \\
&\Longleftrightarrow (u,(d_A + d_A^*)v)_{L^2} = 0 \text{ for all } v \in H^1_R(M, (\Lambda M)^n).
\end{align*}
Thus $K$ is the $L^2$-orthocomplement of the range of $d_A + d_A^*$ on $H^1_R(M,(\Lambda M)^n)$, and since the range is closed the range is equal to $K^{\perp}$. If now $u$ is in the range of $d_A + d_A^*$ acting on $H^s_R(M,(\Lambda M)^n)$ then clearly $u \in H^{s-1}(M,(\Lambda M)^n) \cap K^{\perp}$. Conversely, if $u$ is in this set then $u = (d_A + d_A^*)w$ for some $w \in H^1_R(M,(\Lambda M)^n)$, and elliptic regularity \cite[Theorem 20.1.7]{Hor} implies that $w \in H^s_R(M,(\Lambda M)^n)$.
\end{proof}

Notice that $d_A + d_A^*$ maps even degree forms to odd degree forms and vice versa, and thus the kernel of this operator also splits into even and odd degrees. If $(M,g)$ is two-dimensional, the set of $1$-forms in the kernel of $d_A + d_A^*$ is precisely the set $\mathfrak{H}_A$ of $A$-harmonic $1$-forms considered earlier.

\begin{proof}[Proof of Lemma \ref{lemma:CR} item (1)]
Given a smooth $\C^n$ valued $1$-form $\alpha$, write $\alpha = \beta + \eta$ where $\eta$ is the $L^2$-projection of $\alpha$ to $\mathfrak{H}_A$. Then $\beta$ is orthogonal to the kernel of $d_A+d_A^*$, and by Lemma \ref{lemma:hodgediracconnection} there is $u \in H^1_R(M,(\Lambda M)^n)$ with $(d_A + d_A^*)u = \beta$. By elliptic regularity we have that $u$ is smooth, and writing $u$ in terms of its $0$-form and $2$-form parts, $u = u^0 + u^2$, we see that 
$$
d_A u^0 + d_A^* u^2 + \eta = \alpha.
$$
It is enough to take $p=u^0$ and $a = - \star u^2$, where the boundary condition $tu = 0$ reduces to $p|_{\partial M} = 0$.
\end{proof}


In what follows we will elaborate a bit more on the content of item (1) in Lemma \ref{lemma:CR}.
We shall see that the triviality of $\mathfrak{H}_{A}$ is equivalent to a solvability result for a suitable Cauchy-Riemann operator with  boundary conditions on a totally real subbundle with zero Maslov index. Observe that by Lemma \ref{lemma:hodgediracconnection} the claim $\mathfrak{H}_{A}=0$ is equivalent to the claim that the operator 
$$
D: C^{\infty}(M,\C^n) \oplus C^{\infty}(M,\C^n) \to C^{\infty}(M,(\Lambda^{1} M)^n), \ \ D(p,a) = d_A p + \star d_A a
$$
is surjective, when the boundary condition $p|_{\partial M}=0$ is imposed.

Recall that given a connection $A$ we may use the complex structure on $M$ (via the $\star$ operator) to introduce a $\bar{\partial}_{A}$ operator acting
on sections as $\bar{\partial}_{A}=(d_{A}-i\star d_{A})/2$.
Equivalently $\bar{\partial}_{A}=\bar{\partial}+A^{0,1}$, where
$A^{0,1}=(A-i\star A)/2$. We also have $\partial_{A}=(d_{A}+i\star d_{A})/2=\partial+A^{1,0}$, where
$A^{1,0}=(A+i\star A)/2$. Consider the unitary connection in $\C^n\oplus\C^n$ given by
\[\tilde{A}:=\left(
  \begin{array}{ c c }
     A & 0 \\
     0 & \bar{A}
  \end{array} \right).\]
Define $F\subset \C^n\oplus\C^n$ as the set of all $(z,w)\in  \C^n\oplus\C^n$ such that
$z+\bar{w}=0$. The subspace $F$ is totally real in the sense that $F\cap iF=\{0\}$.
Given $\alpha \in C^{\infty}(M,(\Lambda^{1}M)^n)$ consider the following boundary value problem for $u:M\to \C^{n}\oplus\C^n$:
\begin{equation}
\left\{\begin{array}{ll}
\bar{\partial}_{\tilde{A}}u=(\alpha^{0,1},\overline{\alpha^{1,0}}),\\
u(\partial M)\in F.\\
\end{array}\right.
\label{eq:CR}
\end{equation}
This problem fits the setting of the boundary value Riemann-Roch theorem, see Theorem C.1.10 in \cite{McS}. In our case the subspace $F$ has zero Maslov index  and hence the real Fredholm index
of $\bar{\partial}_{\tilde{A}}$ is $2n$.
Assume that $u$ is a solution to (\ref{eq:CR}) and let $u=(f,g)$. Then $\bar{\partial}_{A}f=\alpha^{0,1}$
and $\bar{\partial}_{\bar{A}}g=\overline{\alpha^{1,0}}$ with $(f+\bar{g})|_{\partial M}=0$.
Since $\bar{A}^{0,1}=\overline{A^{1,0}}$ we have $\partial_{A}\bar{g}=\alpha^{1,0}$.
If we now set $p:=(f+\bar{g})/2$ and $a:=i(\bar{g}-f)/2$, an easy calculation shows that
$d_{A}p+\star d_{A}a=\alpha$ and $p|_{\partial M}=0$.  It is easy to reverse the process: given
$p$ and $a$ with $d_{A}p+\star d_{A}a=\alpha$ and $p|_{\partial M}=0$ one checks that
$u=(p+ia,\bar{p}+i\bar{a})$ solves (\ref{eq:CR}). Hence the solvability of (\ref{eq:CR}) is equivalent to
the surjectivity of $D$ which in turn is equivalent to $\mathfrak{H}_{A}=0$.
The space $\mathfrak{H}_{A}$ then depends exclusively on the holomorphic data determined 
by the connection and the metric.

We now move to the proof of the second part of Lemma \ref{lemma:CR}. In this case one does not need to prescribe the boundary values of the $1$-form $\beta$, and it is enough to use a semiglobal solvability result. As above, consider the operator $d_A$, its adjoint $d_A^*$, and the corresponding Laplacian
$$
-\Delta_A = d_A^* d_A + d_A d_A^*.
$$
This operator acts on $\C^n$-valued graded forms, and it maps $k$-forms to $k$-forms. We will employ the following solvability result.

\begin{Lemma} \label{lemma:deltaA_surjective}
Let $(M,g)$ be a compact subdomain of a closed oriented surface $(S,g)$ such that global isothermal coordinates exist near $M$, and let $Q$ be a first order differential operator with smooth coefficients acting on $C^{\infty}(M, (\Lambda M)^n)$. Given any $f \in C^{\infty}(M, (\Lambda M)^n)$, the equation 
$$
(-\Delta_A + Q)u = f
$$
has a solution $u \in C^{\infty}(M, (\Lambda M)^n)$.
\end{Lemma}
\begin{proof}
Since $-\Delta_A + Q$ has diagonal principal part which is of real principal type, this should follow from semiglobal solvability results \cite[Section 26.1 and references therein]{Hor}. However, we can also do this directly. Note first that 
$$
-\Delta_A + Q = -\Delta I_{n \times n} + Q'
$$
where $\Delta$ is the usual Hodge Laplacian on graded differential forms and $Q'$ is another first order operator. We extend $Q'$ and $f$ smoothly to $S$. The assumption allows us to find global isothermal coordinates in an open set $U$ containing $M$ so that $g = c e$ in $U$ where $c$ is a positive function and $e$ is the Euclidean metric. We think of $U$ as a bounded open set in $\re^2$, and the Hodge Laplacian satisfies 
$$
-\Delta_g = c^{-1} \Delta_e + Q''
$$
for some first order operator $Q''$. To prove the lemma, it is enough to solve the equation 
$$
r_M (-\Delta_e I_{n \times n} + Q) u = c f
$$
for $u \in C^{\infty}(S, (\Lambda S)^n)$, where $Q$ is a first order operator and $r_M$ is the restriction to $M$.

Consider the operator 
$$
T = r_M (-\Delta_e I_{n \times n} + Q): C^{\infty}(S, (\Lambda S)^n) \to C^{\infty}(M, (\Lambda M)^n).
$$
This is a continuous linear operator with adjoint 
$$
T^* = (-\Delta_e I_{n \times n} + Q^*)|_{\mathcal{D}'_M}: \mathcal{D}'_M(S, (\Lambda S)^n) \to \mathcal{D}'(S, (\Lambda S)^n)
$$
where $\mathcal{D}'_M$ denotes distributions in $S$ with support in $M$. To show that $T$ is surjective, it is enough to check that $T^*$ is injective and has weak$*$ closed range \cite[Theorem 37.2]{Treves}. But if $T^* u = 0$, then $u \in C^{\infty}(S, (\Lambda S)^n)$ and $\text{supp}(u) \subset M$ by elliptic regularity. Now we can exploit a global Carleman estimate for $h > 0$ small, for instance 
\begin{equation} \label{hodge_carlemanestimate}
\norm{v}_{L^2(U)} \leq C h \norm{e^{\varphi/h} (-\Delta_e I_{n \times n} + Q^*) e^{-\varphi/h} v}_{L^2(U)}, \quad v \in C^{\infty}_c(U, (\Lambda U)^n),
\end{equation}
where $\varphi(x) = x_1 + h x_1^2/(2\varepsilon)$ for suitable fixed $\varepsilon > 0$. To obtain this estimate, we use the result of \cite[Lemma 2.1]{SaloTzou} which says that for $h \ll \varepsilon \ll 1$, 
$$
\norm{v}_{L^2(U)} + h \norm{\nabla v}_{L^2(U)} \leq C h \sqrt{\varepsilon} \norm{e^{\varphi/h} (-\Delta_e I_{n \times n}) e^{-\varphi/h} v}_{L^2(U)}, \quad v \in C^{\infty}_c(U, (\Lambda U)^n).
$$
(Note that the Hodge Laplacian with Euclidean metric is just the usual Laplacian acting on each component of the forms.) The estimate \eqref{hodge_carlemanestimate} is a consequence of the above inequality if $\varepsilon > 0$ is chosen small enough. Applying \eqref{hodge_carlemanestimate} to $v = e^{\varphi/h} u$ shows that $u = 0$, proving the injectivity of $T^*$. Since $T^*$ is the restriction of an elliptic pseudodifferential operator, the closed range condition also follows as in \cite[proof of Theorem 6.3.1]{DH2}. This shows surjectivity of $T$.
\end{proof}

The next result immediately implies the second part of Lemma \ref{lemma:CR}.

\begin{Lemma} \label{lemma:CR2}
Given $f \in C^{\infty}(M,\C^n)$ there is a smooth $\C^n$-valued 1-form $\beta$ with $d_{A}^* \beta = f$
and $d_{A} \beta = 0$.
\end{Lemma}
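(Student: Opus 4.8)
The plan is to reformulate the two equations as a single Dirac-type equation and then reduce matters to the solvability result already established in Lemma \ref{lemma:deltaA_surjective}. The starting observation is that for a $\C^n$-valued $1$-form $\beta$ the quantity $(d_A+d_A^*)\beta = d_A\beta + d_A^*\beta$ has $2$-form part $d_A\beta$ and $0$-form part $d_A^*\beta$. Hence the system $d_A^*\beta = f$, $d_A\beta = 0$ (with $f$ a $\C^n$-valued function, i.e.\ a $0$-form) is precisely equivalent to the single equation $(d_A+d_A^*)\beta = f$, where the right-hand side is regarded as a $0$-form.

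To solve this I would look for $\beta$ of the form $\beta = (d_A+d_A^*)\gamma$ with $\gamma = \gamma^0 + \gamma^2$ of even degree. Since $d_A^*\gamma^0 = 0$ and $d_A\gamma^2 = 0$ on a surface, this ansatz automatically produces a $1$-form $\beta = d_A\gamma^0 + d_A^*\gamma^2$, and applying $(d_A+d_A^*)$ once more gives $(d_A+d_A^*)\beta = (d_A+d_A^*)^2\gamma$. Thus it is enough to solve the second-order equation $(d_A+d_A^*)^2\gamma = f$.

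The key algebraic point is the identity $(d_A+d_A^*)^2 = -\Delta_A + d_A^2 + (d_A^*)^2$, where $-\Delta_A = d_A d_A^* + d_A^* d_A$. Here $d_A^2 = F_A \wedge (\cdot)$ is multiplication by the curvature and $(d_A^*)^2$ is its formal adjoint; both are zeroth-order operators. Setting $Q := d_A^2 + (d_A^*)^2$, the operator $P := (d_A+d_A^*)^2 = -\Delta_A + Q$ falls exactly within the scope of Lemma \ref{lemma:deltaA_surjective}, since a zeroth-order $Q$ is in particular first order with smooth coefficients. Applying that lemma yields a smooth $\gamma$ with $P\gamma = f$. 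As $P$ preserves degree parity and $f$ is of even degree, replacing $\gamma$ by its even part we may assume $\gamma = \gamma^0 + \gamma^2$. Then $\beta := d_A\gamma^0 + d_A^*\gamma^2$ is smooth and $(d_A+d_A^*)\beta = P\gamma = f$; comparing the $0$-form and $2$-form parts gives $d_A^*\beta = f$ and $d_A\beta = 0$, as required.

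The substance of the argument is delegated to Lemma \ref{lemma:deltaA_surjective}, so the present proof is short. The one point that requires care, and the conceptual reason a bare inverse of $-\Delta_A$ does not suffice, is the curvature term $d_A^2 = F_A$: it is nonzero for a general connection and it obstructs the naive attempt $\beta = d_A u$, which would force $d_A\beta = F_A u \neq 0$. Accommodating this zeroth-order curvature contribution is exactly what the lower-order perturbation $Q$ in the solvability lemma provides, and this is the step I expect to be the main obstacle in the sense that the flexibility in $Q$ (rather than solvability for $-\Delta_A$ alone) is essential.
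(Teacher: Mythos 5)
Your proposal is correct and is essentially the paper's own proof: the ansatz $\beta = d_A\gamma^0 + d_A^*\gamma^2$ is exactly the paper's $\beta = d_A u^0 + d_A^* u^2$, and your identity $(d_A+d_A^*)^2 = -\Delta_A + F_A\wedge(\cdot) + (F_A)^*$ is just a repackaging of the paper's component system $d_A^* d_A u^0 + (F_A)^* u^2 = f$, $d_A d_A^* u^2 + F_A u^0 = 0$, both reducing to $(-\Delta_A + Q)u = f$ with $Q$ of order zero and invoking Lemma \ref{lemma:deltaA_surjective}. Your explicit parity argument for discarding the odd part of $\gamma$ is a minor tidiness improvement over the paper, which builds evenness into the ansatz from the start.
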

\begin{proof}
Look for $\beta$ of the form $\beta = d_A u^0 + d_A^* u^2$ for some smooth forms $u^0, u^2$. Then we need that 
\begin{align*}
d_A^* d_A u^0 + (F_A)^* u^2 &= f, \\
d_A d_A^* u^2 + F_A u^0 &= 0,
\end{align*}
where $F_A = d_A \circ d_A = dA + A \wedge A$ is the curvature of $d_A$. Writing $u = u^0 + u^2$, these equations are equivalent to
$$
-\Delta_A u + Qu = f
$$
for some operator $Q$ of order $0$. Lemma \ref{lemma:deltaA_surjective} implies the existence of a smooth solution $u$, which gives the required $\beta$.
\end{proof}

\end{document}